\renewcommand{\proofname}{Proof}
\theoremstyle{plain}
\newtheorem{thm}{Theorem}
\newtheorem{lem}[thm]{Lemma}
\newtheorem{cor}[thm]{Corollary}
\newtheorem{prop}[thm]{Proposition}
\theoremstyle{definition}
\newtheorem{ex}[thm]{Example}
\newtheorem{eg}[thm]{Example}
\newtheorem{exo}[thm]{Exercise}
\newtheorem{rem}[thm]{Remarque}
\newtheorem{defi}[thm]{Definition}
\newtheorem{defn}[thm]{Definition}
\newtheorem{nota}[thm]{Notation}
\newcommand{\bgt}{\begin{thm}}
\newcommand{\et}{\end{thm}}
\newcommand{\bgp}{\begin{prop}}
\newcommand{\ep}{\end{prop}}
\newcommand{\bgd}{\begin{defi}}
\newcommand{\ed}{\end{defi}}
\newcommand{\bgl}{\begin{lem}}
\newcommand{\el}{\end{lem}}
\newcommand{\bgr}{\begin{rem}}
\newcommand{\er}{\end{rem}}
\newcommand{\bge}{\begin{ex}}
\newcommand{\ee}{\end{ex}}
\newcommand{\bgex}{\begin{exo}}
\newcommand{\eex}{\end{exo}}
\newcommand{\bgi}{\begin{itemize}}
\newcommand{\ei}{\end{itemize}}
\newcommand{\bgpe}{\begin{proof}}
\newcommand{\epe}{\end{proof}}
\newcommand{\bgar}{\begin{array}}
\newcommand{\ear}{\end{array}}
\newcommand{\bgn}{\begin{nota}}
\newcommand{\en}{\end{nota}}
\newcommand{\bgc}{\begin{cor}}
\newcommand{\ec}{\end{cor}}
\newcommand{\bgen}{\begin{enumerate}}
\newcommand{\een}{\end{enumerate}}
\newcommand{\qform}[1]{{\langle{#1}\rangle}}  
\DeclareMathOperator{\Aut}{{Aut}}
\DeclareMathOperator{\ind}{{ind}}  
\DeclareMathOperator{\res}{res}
\newcommand{\Sp}{\mathrm{Sp}}
\newcommand{\Spin}{\mathrm {Spin}}
\DeclareMathOperator{\Sl}{SL}
\DeclareMathOperator{\su}{SU}
\newcommand{\la}{\lambda}
\newcommand{\ot}{\otimes}
\newcommand{\Gt}{\widetilde{G}}
\newcommand{\ksep}{k_{\mathrm{sep}}}
\newcommand{\Gm}{\mathbb{G}_m}
\newcommand{\Z}{\mathbb{Z}}
\newcommand{\F}{\mathbb{F}}
\newcommand{\QQ}{\mathbb{Q}}
\newcommand{\RR}{\mathbb{R}}
\DeclareMathOperator{\car}{char}
\DeclareMathOperator{\Dec}{Dec}
\DeclareMathOperator{\Gal}{Gal}
\newcommand{\darkrad}{0.17}
\newcommand{\lrad}{0.4}
\newcommand{\rb}[1]{\raisebox{0.1in}[0pt]{#1}}
\newcommand{\barr}{\bar{r}}
\newcommand{\pform}[1]{{\langle\!\langle{#1}\rangle\!\rangle}} 
\title{On the Tits $p$-indexes of semisimple algebraic groups}
\author{Charles De Clercq}
\author{Skip Garibaldi}
\thanks{SG's research was partially supported by NSF grant DMS-1201542.  Part of this research was performed while SG was at the Institute for Pure and Applied Mathematics (IPAM), which is supported by the National Science Foundation.}
\date{\today}
\subjclass[2010]{20G15}
\begin{document}
\renewcommand{\proofname}{Proof}
\renewcommand{\refname}{References}
\renewcommand{\abstractname}{Abstract}


\bigskip
\maketitle

\begin{abstract}
The first author has recently shown that semisimple algebraic groups are classified up to motivic equivalence by the local versions of the classical Tits indexes over field extensions, known as Tits p-indexes.  We provide in this article the complete description of the values of the Tits p-indexes over fields. From this exhaustive study, we also deduce criteria of motivic equivalence for semisimple groups of many types, hence giving a dictionary between classic algebraic structures, representation theory, cohomological invariants and Chow motives of the twisted flag varieties for those groups.
\end{abstract}

The \emph{Tits index} (sometimes called Satake diagram) of a semisimple linear algebraic group $G$ over a field $k$ includes as special cases the classical notions of Schur index of a central simple associative algebra and the Witt index of a quadratic form.  It is a fundamental invariant of semisimple algebraic groups.
However, for the purpose of stating and proving theorems about Chow motives with $\mathbb{F}_p$ coefficients, one should consider not the Tits index of $G$, but rather the (Tits) $p$-index, meaning the Tits index of $G_L$ where $L$ is an algebraic extension of $k$ of degree not divisible by $p$, yet all the finite algebraic extensions of $L$ have degree a power of $p$.  Such an $L$ is called a \emph{$p$-special closure} of $k$ in \cite[\S101.B]{EKM} and all such fields are isomorphic as $k$-algebras, so the notion of Tits $p$-index over $k$ is well defined.

Let $G$ be a semisimple algebraic group over $k$. As shown in \cite{clercq:motivicequivalence},  the Tits $p$-indexes of $G$ on all fields extensions of $k$ --- the \emph{higher Tits $p$-indexes of $G$} --- determine the motivic equivalence class of $G$ modulo $p$. The aim of this article is to determine the values of the Tits $p$-indexes of the absolutely simple algebraic groups, using as a starting point the known list of possible Tits indexes as in \cite{Ti:Cl}, \cite{Sp:LAG}, or \cite{PetrovStavrova}. Along the way, we give in some cases  criteria for motivic equivalence for semisimple groups in terms of their algebraic and cohomological invariants.

\section{Generalities}

\subsection{Definition of the Tits index \cite{Ti:Cl}, \cite{PetrovStavrova}, \cite[\S1]{MPW1}}
The \emph{Tits index} is (1) the Dynkin diagram of $G$, which we conflate with its set $\Delta$ of vertices, together with (2) the action of the absolute Galois group $\mathrm{Gal}(k)$ of $k$ on $\Delta$, and (3) a $\mathrm{Gal}(k)$-invariant subset $\Delta_0\subset \Delta$.  Specifically, pick a maximal $k$-torus $T$ in $G$ containing a maximal $k$-split torus $S$.  For $\ksep$ a separable closure of $k$ --- so $\Gal(k)$ are the $k$-automorphisms of $\ksep$ --- $T \times \ksep$ and $G \times \ksep$ are split, and, from the set $\Phi$ of roots of $G \times \ksep$ with respect to $T \times \ksep$ one picks a set of simple roots $\Delta$.  As $T$ is $k$-defined, $\Gal(k)$ acts naturally on $\Phi$; this action need not preserve $\Delta$, but modifying it by elements of the Weyl group in a natural way gives a canonical action of $\Gal(k)$ on $\Delta$, called the \emph{$*$-action}.  The resulting graph (Dynkin diagram with vertex set $\Delta$) with action by $\Gal(k)$ is uniquely defined up to isomorphism in the category of graphs with a $\Gal(k)$-action; it does not depend on the choice of $T$ or $\Delta$.

Choose orderings on $T^* \ot_\Z \RR$ and $S^* \ot_\Z \RR$ such that the linear map, restriction $T^* \to S^*$, takes nonnegative elements of $T^*$ to nonnegative elements of $S^*$.  Define $\Delta_0$ to be the set of $\alpha \in \Delta$ such that $\alpha\vert_S = 0$; it is a union of $\Gal(k)$-orbits in $\Delta$.
One has $\Delta_0=\Delta$ iff $G$ is anisotropic and $\Delta_0=\emptyset$ iff $G$ is quasi-split. The elements of $\delta_0=\Delta \setminus \Delta_0$ are called \emph{distinguished} and the number of $\Gal(k)$-orbits of distinguished elements equals the rank of a maximal $k$-split torus in $G$.

To represent the Tits index graphically, one draws the Dynkin diagram and circles the distinguished vertices.  Traditionally, one indicates the Galois action by drawing vertices in the same $\Gal(k)$-orbit physically close to each other on the page, and by using one large circle or oval to enclose each $\Gal(k)$-orbit in $\delta_0$.  The Tits index of $G$ has no circles iff $G$ is anisotropic, and every vertex is circled iff $G$ is quasi-split.

The definition of Tits index is compatible with base change, as explained carefully in \cite[pp.~115, 116]{PrRap:fields}. That is, for each extension $E$ of $k$, the Tits index of $G \times E$ may be taken to have the same underlying graph (the Dynkin diagram with vertex set $\Delta$) with $\Gal(E)$-action given by the restriction map $\Gal(E) \to \Gal(k)$, and with set of distinguished vertices containing the distinguished vertices in the Tits index of $G$.

\subsection{Which primes $p$?}
If every simple group of a given type is split by a separable extension of degree not divisible by $p$, then the only possible $p$-index is the split one.  Thus, \cite[\S2.2]{SeCGp} gives a complete list $S(G)$ of the primes meriting consideration, which we reproduce in Table \ref{SG.table}.  (Or see \cite{Ti:deg} for more precise information on degrees of splitting fields.)

To say this in a different way, we fix a prime $p$ and will describe the possible Tits indexes of a simple algebraic group over a field $k$ that is \emph{$p$-special}, i.e., such that every finite extension has degree a power of $p$. 

\begin{table}[hbt]
\begin{tabular}{c|cc} \\
type of $G$&exponent of the center&elements of $S(G)$ \\ \hline
$A_n$&$n+1$&2 and the prime divisors of $n+1$\\
$B_n$, $C_n$, $D_n$ ($n \ne 4$)&2&2 \\
$G_2$&1&2 \\
$D_4$, $E_7$&2&2 and 3\\
$F_4$&1&2 and 3\\
$E_6$&3&2 and 3\\
$E_8$&1&2, 3, and 5
\end{tabular}
\caption{Primes $S(G)$ as in \cite{SeCGp}} \label{SG.table}
\end{table}

\subsection{Twisted flag varieties and motivic equivalence \cite{BoTi:C}, \cite{BoSp2}, \cite[\S1]{MPW1}}
For each subset $\Theta$ of $\Delta$, there is a parabolic subgroup $P_\Theta$ of $G \times \ksep$ (determined by the choice of simple roots) whose Levi subgroup has Dynkin diagram $\Delta \setminus \Theta$.  The (projective) quotient variety $(G \times \ksep)/P_\Theta$ is the variety of parabolic subgroups of $G \times \ksep$ that are conjugate to $P_\Theta$.  This variety is $k$-defined if and only if $\Theta$ is invariant under $\Gal(k)$, in which case we denote it by $X_\Theta$.  These varieties are the \emph{twisted flag varieties} of $G$.  

Now fix a $\Gal(k)$-invariant subset $\Theta$ of $\Delta$.  The following are equivalent: \emph{(1) $X_\Theta$ has a $k$-point; (2) $X_\Theta$ is a rational variety; (3) $\Theta \subseteq \delta_0$.}  In this way, the Tits index of $G$ gives information about $X_\Theta$.  

In case $X_\Theta$ does not have a $k$-point, the Chow motive of $X_\Theta$ nonetheless gives information about the geometry of $X_\Theta$.
Suppose now that $G'$ is a semisimple group over $k$, and that the quasi-split inner forms 
$G_{\nu_G}$, $G'_{\nu_{G'}}$ of $G$, $G'$ are isogenous.  That is, suppose that there is an isomorphism $f$ from the Dynkin diagram $\Delta$ of $G$ to that of $G'$ that commutes with the action of $\Gal(k)$.  This defines a correspondence $X_\Theta \leftrightarrow X_{f(\Theta)}$ between the twisted flag varieties of $G$ and $G'$.  The groups $G$, $G'$ are \emph{motivic equivalent modulo a prime $p$} if there is a choice of $f$ such that the mod-$p$ Chow motives of $X_\Theta$ and $X_{f(\Theta)}$ are isomorphic for every $\Gal(k)$-stable $\Theta \subseteq \Delta$.  The groups $G$, $G'$ are \emph{motivic equivalent} if the groups are motivic equivalent mod $p$ for every prime $p$ (where $f$ may depend on $p$).  
The main result of \cite{clercq:motivicequivalence} says, for a prime $p$: \emph{$G$ and $G'$ are motivic equivalent mod $p$ iff there is an isomorphism $f$ whose base change to each $p$-special field $E$ containing $k$ identifies the distinguished vertices in the Tits index of $G \times E$ with those in the Tits index of $G' \times E$.}  Informally, $G$ and $G'$ are motivic equivalent mod $p$ iff $G \times K$ and $G' \times K$ have the same Tits $p$-index for every extension $K$ of $k$.  This theorem is one motivation for our study of the possible Tits $p$-indexes of semisimple groups.

\subsection{The quasi-split type of $G$} \label{qs.type}
A group $G$ over $k$ has \emph{quasi-split type} $^tT_n$ if, upon base change to a separable closure $\ksep$ of $k$, $G \times \ksep$ is split with root system of type $T_n$ and if the image of $\Gal(k) \to \Aut(\Delta)$ has order $t$.  If $t = 1$, then $G$ is said to have \emph{inner type} and otherwise $G$ has \emph{outer type}.  
In the case where $k$ is $p$-special, evidently $t$ must be a power of $p$.

\subsection{The Tits class of $G$}
Suppose $G$ is adjoint with simply connected cover $\Gt$; one has an exact sequence $1 \to Z \to \Gt \to G \to 1$ where $Z$ is the scheme-theoretic center of $\Gt$.  This gives a connecting homomorphism $\partial \!: H^1(k, G) \to H^2(k, Z)$; here and below $H^i$ denotes fppf cohomology.  There is a unique class $\nu_G \in H^1(k, G)$ such that twisting $G$ by $\nu_G$ gives a quasi-split group \cite[31.6]{KMRT}, and we call $t_G := -\partial(\nu_G) \in H^2(k, Z)$ the \emph{Tits class} of $G$.

As a finite abelian group scheme, there is a unique minimal natural number $n$ such that multiplication by $n$ is the zero map on $Z$, it is called the exponent of $Z$.  If $p$ does not divide $n$ and $k$ is $p$-special, then the Tits class of $G$ is necessarily zero.

\begin{lem} \label{tits.class}
Suppose $G$ is a semisimple adjoint algebraic group with simply connected cover $\Gt$.
If $t_G = 0$, then there is a unique class $\xi_G \in H^1(k, \Gt)$ so that the twisted group $\Gt_{\xi_G}$ is quasi-split.
\end{lem}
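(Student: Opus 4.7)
The plan is to extract $\xi_G$ from the long exact sequence of fppf cohomology attached to the central isogeny $1 \to Z \to \Gt \to G \to 1$, namely
$\cdots \to H^1(k, \Gt) \to H^1(k, G) \xrightarrow{\partial} H^2(k, Z) \to \cdots$.
The hypothesis $t_G = -\partial(\nu_G) = 0$ ensures that $\nu_G$ lifts to some class $\xi_G \in H^1(k, \Gt)$, which gives existence. To see that $\Gt_{\xi_G}$ is quasi-split, I would twist the central extension by $\xi_G$: because $Z$ lies in the center, it is unaffected by twisting, and we obtain a new central extension $1 \to Z \to \Gt_{\xi_G} \to G_{\nu_G} \to 1$. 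By construction $G_{\nu_G}$ is quasi-split, so it admits a Borel subgroup defined over $k$; its preimage in $\Gt_{\xi_G}$ is $k$-defined and becomes Borel after base change to $\ksep$, so $\Gt_{\xi_G}$ is itself quasi-split.

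For uniqueness, I would first argue that any $\xi \in H^1(k, \Gt)$ with $\Gt_\xi$ quasi-split has image $\nu_G$ in $H^1(k, G)$, since the adjoint quotient of a quasi-split group is quasi-split and $\nu_G$ is characterized by this property. So any two candidates lie in the fiber of $H^1(k, \Gt) \to H^1(k, G)$ over $\nu_G$. The standard twisting bijection identifies this fiber with the kernel of $H^1(k, \Gt_{\xi_G}) \to H^1(k, G_{\nu_G})$, and by the exact sequence for the twisted central extension above, this kernel is precisely the image of $H^1(k, Z) \to H^1(k, \Gt_{\xi_G})$.

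The crux --- and the step I expect to be the main obstacle --- is showing that this image is trivial. My plan is to factor it through a $k$-defined maximal torus $T$ in a $k$-defined Borel of $\Gt_{\xi_G}$; then $Z \subset T$, so one obtains $H^1(k, Z) \to H^1(k, T) \to H^1(k, \Gt_{\xi_G})$. Because $\Gt_{\xi_G}$ is simply connected, $X_*(T)$ is the coroot lattice, whose basis of simple coroots is permuted by the $*$-action of $\Gal(k)$; hence $T$ is an induced (quasi-trivial) torus and $H^1(k, T) = 0$ by Shapiro's lemma and Hilbert~90. This collapses the factorization, forces the fiber above $\nu_G$ to be a singleton, and establishes uniqueness. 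Note that this argument genuinely uses simply connectedness: the analogous statement fails at the level of adjoint groups, where the cocharacter lattice is the coweight lattice and need not be a permutation module.
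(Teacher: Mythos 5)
Your proof is correct and arrives at the same final reduction as the paper --- namely, showing $H^1(k, Z) \to H^1(k, \Gt^{qs})$ has trivial image by factoring through $H^1(k, T) = 0$ for a quasi-trivial maximal torus $T$ --- but you take a different and more self-contained route to get there. The paper handles uniqueness by observing that ``$\Gt_\xi$ quasi-split'' is an isomorphism condition, hence equivalent to $\xi$ lying in $\ker\bigl(H^1(k, \Gt^{qs}) \to H^1(k, \Aut(\Gt^{qs}))\bigr)$, and then invokes the nontrivial result \cite[Theorem 11, Example 15]{G:outer} to identify that kernel with the image of $H^1(k, Z)$. You instead argue directly that any $\xi$ with $\Gt_\xi$ quasi-split must already map to $\nu_G$ in $H^1(k, G)$ (because the adjoint quotient of $\Gt_\xi$ is $G$ twisted by the image of $\xi$, and $\nu_G$ is the unique class with $G_{\nu_G}$ quasi-split by \cite[31.6]{KMRT}); then the twisting bijection plus the ordinary exact sequence of the central extension $1 \to Z \to \Gt_{\xi_G} \to G_{\nu_G} \to 1$ identify the fiber over $\nu_G$ with the image of $H^1(k, Z)$. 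This buys you independence from \cite{G:outer} at the cost of one extra remark about compatibility of twisting with the adjoint quotient (which is standard). You also make explicit two things the paper leaves implicit: why a lift $\xi_G$ of $\nu_G$ has $\Gt_{\xi_G}$ quasi-split, and why the maximal torus is quasi-trivial (via the coroot lattice being a permutation module under the $*$-action; phrasing it through $X^*(T)$ and the fundamental weights is the more common convention, but the two are equivalent since permutation modules are self-dual). All steps check out.
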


\begin{proof}
As $t_G = 0$, the exactness of the sequence $H^1(k, \Gt) \to H^1(k, G) \xrightarrow{\partial} H^2(k, Z)$ shows that there is a $\xi \in H^1(k, \Gt)$ mapping to $\nu_G$, and it remains to prove uniqueness.  

By twisting, it is the same to show that, for $\Gt$ quasi-split simply connected, the map $H^1(k, \Gt) \to H^1(k, \Aut(\Gt))$ has zero kernel.  By \cite[Theorem 11, Example 15]{G:outer}, the kernel of the map is the image of $H^1(k, Z) \to H^1(k, \Gt)$.  As $Z$ is contained in a quasi-trivial maximal torus $S$ of $\Gt$,  the map factors through $H^1(k, S) = 0$. 
\end{proof}

\subsection{The Tits algebras of $G$ \cite[\S27]{KMRT}, \cite{Ti:R}}
Let $\Gt$, $Z$ be as in the previous subsection.
The Tits class provides, by way of the Tits algebras, a cohomological obstruction for an irreducible representation of $\Gt \times \ksep$ over $\ksep$ to be defined over $k$.  Specifically, such a representation has highest weight a dominant weight $\la$.  Put $k(\la)$ for the subfield of $\ksep$ of elements fixed by the stabilizer of $\la$ in $\Gal(k)$  (under the $*$-action).  The weight $\la$ is fixed by $\Gal(k(\la))$, i.e., $\la$ restricts to a homomorphism $Z \to \Gm$. The \emph{Tits algebra} $A_G(\la)$ is the image of $t_G$ under the induced map $\la \!: H^2(k(\la), Z) \to H^2(k(\la), \Gm)$; the irreducible representation is defined over $k(\la)$ iff $A_G(\la) = 0$.  (Note that in this definition $A_G(\la)$ is only a Brauer class, but a more careful definition gives a central simple algebra whose degree equals the dimension of the representation.)

As $H^2(k, Z)$ is a torsion abelian group where every element has order dividing the exponent of $Z$, there is a unique element $t_{G,p}$ of $p$-primary order such that $t_G - t_{G,p}$ has order not divisible by $p$.  We write $A_{G,p}(\la) \in H^2(k(\la), \Gm)$ for the image of $t_{G,p}$ under $\la$; it is the $p$-primary component of the Brauer class $A_G(\la)$.  We now show that if $G$ and $G'$ are motivic equivalent mod $p$, then they must have the same Tits algebras ``up to prime-to-$p$ extensions''.  Note that if $G$ and $G'$ are motivic equivalent mod $p$ for some $p$, then the isomorphism $f$ provides an identification of the centers of their simply connected covers, and we may write $Z$ for both centers.

\begin{prop}
Suppose $G$ and $G'$ are absolutely simple algebraic groups that are motivic equivalent mod $p$ via an isomorphism of Dynkin diagrams $f$.  Then $t_{G,p}$ and $t_{G',p}$ generate the same subgroup of $H^2(k, Z)$ and, for every dominant weight $\la$, $A_{G,p}(\la)$ and $A_{G',p}(f(\la))$ generate the same subgroup of $H^2(k(\la), \Gm)$.
\end{prop}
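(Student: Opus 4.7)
My plan is to apply the main theorem of \cite{clercq:motivicequivalence} recalled above: motivic equivalence mod $p$ via $f$ is equivalent to $f$ identifying the Tits $p$-indexes of $G \times E$ and $G' \times E$ for every extension $E/k$. I establish the Tits algebra statement first and derive the Tits class statement from it.

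Fix a dominant weight $\lambda$ and set $K = k(\lambda)$. The central ingredient is that for any $p$-special field $L \supseteq K$ and any character $\mu$ of $Z$ with $k(\mu) \subseteq L$, the index of the Brauer class $A_G(\mu) \otimes L$ is determined by the distinguished vertices of the Tits index of $G \times L$, via Tits' formula \cite{Ti:R}; in particular $A_G(\mu) \otimes L$ vanishes if and only if the irreducible representation with highest weight $\mu$ is defined over $L$. Applying this with $\mu = p^j \lambda$ and using the identity $p^j A_{G,p}(\lambda) = A_{G,p}(p^j \lambda)$ (additivity of $\lambda \mapsto \lambda_*(t_G)$), motivic equivalence mod $p$ gives, for every $j \ge 0$ and every extension $E/K$, the equivalence $p^j A_{G,p}(\lambda) \otimes E = 0 \iff p^j A_{G',p}(f(\lambda)) \otimes E = 0$.

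To upgrade this to equality of cyclic subgroups, specialize $E$ to the function field of the Severi-Brauer variety of $A_{G,p}(\lambda)$. This generic splitting field kills $A_{G,p}(\lambda)$, hence also $A_{G',p}(f(\lambda))$ by the previous step, so Amitsur's theorem places $A_{G',p}(f(\lambda))$ in $\langle A_{G,p}(\lambda)\rangle$; swapping the roles of $G$ and $G'$ gives the reverse inclusion. The statement on the Tits classes then follows by functoriality, as Shapiro's lemma recovers $\langle t_{G,p}\rangle \subset H^2(k, Z)$ from the subgroups $\langle A_{G,p}(\lambda)\rangle$ at a generating set of characters of the $p$-primary part of $\hat Z$. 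The main obstacle is the Amitsur-type step: for $p$-primary Brauer classes of prime exponent it is classical, but for higher exponents one must replace the Severi-Brauer variety by a suitable generalized Severi-Brauer variety and invoke Karpenko-style arguments on upper motives to pin down the cyclic subgroup rather than merely the order.
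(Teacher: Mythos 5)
Your central step is not justified, and I don't think it can be salvaged in the form you state it. In the passage from your step 3 to step 4 you assert that, over a $p$-special field $L$, the index of $A_G(\mu)\otimes L$ is ``determined by the distinguished vertices of the Tits index of $G\times L$.'' That is false except for type $A$. For a group of type $D_n$, say $G=\Spin(A,\sigma)$, the vertex $\alpha_1$ is distinguished iff the hermitian form attached to $(A,\sigma)$ is isotropic; having $[A]=A_G(\lambda_1)$ split is necessary but not sufficient. So after you base change to the $p$-special closure $L$ of $k(\mathrm{SB}(A_{G,p}(\lambda)))$, you have arranged $A_G(\lambda)\otimes L=0$, but this need not change the Tits index of $G\times L$ at all --- the group can remain anisotropic. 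Motivic equivalence then tells you nothing about $G'\times L$, and in particular gives you no information about whether $A_{G'}(f(\lambda))\otimes L$ vanishes. The implication you want in step 4, ``$A_{G,p}(\lambda)\otimes E=0\iff A_{G',p}(f(\lambda))\otimes E=0$,'' is therefore left without proof; it is essentially equivalent to the conclusion you are trying to establish.

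The paper avoids this by running the argument through the flag variety $X_\alpha$ of $G$ itself, for $\alpha$ the simple root dual to a (minuscule, fundamental) weight $\lambda_0$, rather than through the Severi--Brauer variety of the Tits algebra. Over $K = $ a $p$-special closure of $k(X_\alpha)$, the vertex $\alpha$ is distinguished for $G\times K$ \emph{tautologically}; motivic equivalence forces $f(\alpha)$ to be distinguished for $G'\times K$; and then one uses only the \emph{one-way} direction of Tits' criterion (distinguished $\Rightarrow$ the Tits algebra splits) to get $A_{G'}(f(\lambda_0))\otimes K = 0$. The contradiction now comes from the Merkurjev--Tignol index reduction theorem for $k(X_\alpha)$, which shows the Brauer kernel of $k\to K$ lies in $\langle A_{G,p}(\lambda_0)\rangle$. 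The substitution $SB(A_{G,p}(\lambda))$ for $X_\alpha$ breaks the first, crucial link; your observed worry about the ``Amitsur step'' is misplaced (Amitsur's theorem would be perfectly adequate there), the gap is earlier. Incidentally, your passage through $p^j\lambda$ is also not what is needed; the paper instead reduces to a minuscule fundamental weight generating $Z^*$ using the structure of the center. The final paragraph, deducing the Tits class statement from the subgroups $\langle A_{G,p}(\lambda)\rangle$ via the character decomposition, matches the paper's last step (which cites the injectivity of $H^2(k,Z)\to\prod_\chi H^2(k(\chi),\Gm)$) and is fine once the Tits-algebra statement is established.
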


\begin{proof}
For sake of contradiction, suppose that $G$ and $G'$ are motivic equivalent mod $p$ with respect to some isomorphism of their Dynkin diagrams $f$, yet for some $\la$ that $A_{G,p}(\la)$ and $A_{G',p}(f(\la))$ generate distinct subgroups of $H^2(k(\la), Z)$ -- in particular the groups have types $A$, $B$, $C$, $D$, $E_6$, or $E_7$.

First consider the case where $k(\la) = k$ and $G$ has inner type but not $D_n$ for $n$ even.  Then $Z^*$ is cyclic and for each minuscule fundamental weight $\la_0$ that generates $Z^*$, $A_G(\la)$ is a multiple of $A_G(\la_0)$ and similarly for $G'$, so we may assume that $\la = \la_0$.  By hypothesis, and swapping the roles of $G$ and $G'$ if necessary, the subgroup $A_{G,p}(\la)$ does not contain $A_{G',p}(\la)$.  Put $\alpha$ for the simple root dual to $\la$ and set $K$ to be a $p$-special closure of the function field of the twisted flag variety $X_\alpha$ for $G$.  Then $\alpha$ is distinguished in the Tits index of $G \times K$ (trivially), while $A_{G',p}(f(\la)) \ot K$ is not split by \cite[Th.~B]{MT} so the simple root $f(\alpha)$ is not distinguished for $G' \times K$ by \cite[p.~211]{Ti:R}, providing the desired contradiction.  

If $k(\la) = k$ and $G$ has inner type $D_n$ for $n$ even, then $p = 2$, and by a similar argument we may replace the given $\la$ by one of the minuscule fundamental weights.

Suppose $k(\la) = k$ and $G$ has outer type.  If $G$ has type $^2A_n$, then as $\la\vert_Z$ is not zero, $n$ is odd and $\la$ is a sum of the unique $\Gal(k)$-fixed fundamental weight $\la_0$ and a weight that restricts to zero on $Z$, hence $A_G(\la) = A_G(\la_0)$ and we may replace $\la$ with $\la_0$ and argue as in the previous cases.  A similar argument treats the case of type $^2D_n$ for $n$ odd.  For type $^2D_n$ with $n$ even, we may replace $\la$ with the maximal weight of the natural module.  The case of types $^3D_4$, $^6D_4$, and $^2E_6$ do not occur, as every $\Gal(k)$-stable dominant weight restricts to zero on $Z$.

If $k(\la)$ is a proper extension of $k$, then replacing $G$, $G'$, $k$ by $G \times k(\la)$, $G' \times k(\la)$, $k(\la)$ produces absolutely simple groups over $k(\la)$ with the same Tits algebras at $\la$.  Applying the $k = k(\la)$ case again gives a contradiction.

Finally, if $t_{G,p}$ and $t_{G',p}$ generate different subgroups of $H^2(k, Z)$, then as the map $H^2(k, Z) \to \prod_{\chi \in Z^*} H^2(k(\chi), Z)$ is injective \cite[Prop.~7]{G:outer}, there is some $\la$ such that $A_{G,p}(\la)$ and $A_{G',p}(\la)$ generate different subgroups of $H^2(k(\la), Z)$.
\end{proof}

\subsection{The Rost invariant \cite{GMS}, \cite{KMRT}}
We refer to \cite[pp.~105--158]{GMS} for the precise definition of the abelian torsion groups $H^3(k, \Z/d\Z(2)) \to H^3(k, \QQ/\Z(2))$; if $d$ is not divisible by $\car k$, then $H^3(k, \Z/d\Z(2)) = H^3(k, \mu_d^{\otimes 2})$ and for all $k$, the natural inclusion identifies $H^3(k, \Z/d\Z(2))$ with the $d$-torsion in $H^3(k, \QQ/\Z(2))$.
For $\Gt$ a simple simply connected algebraic group, there is a canonical morphism of functors 
\[
r_{\Gt} \!: H^1(*, \Gt) \to H^3(*, \QQ/\Z(2))
\]
known as the \emph{Rost invariant}.  
The order $n_{\Gt}$ of $r_{\Gt}$ is known as the Dynkin index of $\Gt$, and $r_{\Gt}$ can be viewed as a morphism $H^1(*, \Gt) \to H^3(*, \Z/n_{\Gt}\Z(2))$.

\begin{lem} \label{rost.m}
Let $\Gt$ be absolutely simple and simply connected with center $Z$.  Put $m$ for the largest divisor of $n_{\Gt}$ that is relatively prime to the exponent of $Z$.  Then there is a canonical morphism of functors $\barr_{\Gt}$ such that 
the diagram
\[
\begin{CD}
H^1(*, \Gt/Z) @>{\barr_{\Gt}}>> H^3(*, \Z/m\Z(2)) \\ 
@AAA @AA{\pi}A \\
H^1(*, \Gt) @>{r_{\Gt}}>> H^3(*, \Z/n_{\Gt}\Z(2))
\end{CD}
\]
commutes, where $\pi$ is the projection arising from the Chinese remainder decomposition of $H^3(*, \Z/n_{\Gt}\Z(2))$.
\end{lem}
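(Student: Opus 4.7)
The plan is to construct $\barr_{\Gt}$ by assembling $p$-primary pieces, using $p$-special closures of the base field to obtain unique lifts from $H^1(\cdot,\Gt/Z)$ to $H^1(\cdot,\Gt)$.

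By the Chinese remainder theorem, $H^3(*,\Z/n_{\Gt}\Z(2))$ decomposes as $\bigoplus_{p\mid n_{\Gt}}H^3(*,\Z/p^{v_p(n_{\Gt})}\Z(2))$, with $\pi$ the sum of the projections onto the summands indexed by $p\mid m$. Since $v_p(m)=v_p(n_{\Gt})$ for every such $p$, it suffices to construct, for each prime $p\mid m$, a morphism of functors $\barr^{(p)}_{\Gt}:H^1(*,\Gt/Z)\to H^3(*,\Z/p^{v_p(m)}\Z(2))$ whose precomposition with $H^1(*,\Gt)\to H^1(*,\Gt/Z)$ is the $p$-primary component $\pi_p\circ r_{\Gt}$.

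Fix such a prime $p$; note $\gcd(p,|Z|)=1$ since $p\nmid\exp(Z)$. For a field $K/k$, let $L$ denote a $p$-special closure of $K$. Then $\Gal(L)$ is pro-$p$ and $Z$ has order coprime to $p$, so $H^i(L,Z)=0$ for all $i\geq 1$ (a standard argument via Kummer theory handles any infinitesimal component of $Z$ in bad characteristic). In particular, the canonical map $H^1(L,\Gt)\to H^1(L,\Gt/Z)$ is a bijection. Given $\bar\eta\in H^1(K,\Gt/Z)$, a colimit argument produces a finite subextension $K'/K$ of $L$, necessarily of degree coprime to $p$, together with a lift $\tilde\eta_{K'}\in H^1(K',\Gt)$ of $\bar\eta_{K'}$. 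The element $\pi_p(r_{\Gt}(\tilde\eta_{K'}))\in H^3(K',\Z/p^{v_p(m)}\Z(2))$ does not depend on the choice of lift, since any two lifts become equal after base change to $L$, and restriction $H^3(K',\Z/p^{v_p(m)}\Z(2))\to H^3(L,\Z/p^{v_p(m)}\Z(2))$ is injective ($L$ being a $p$-special closure of $K'$ as well). I then set
\[
\barr^{(p)}_{\Gt}(\bar\eta) := [K':K]^{-1}\cdot\mathrm{cor}_{K'/K}\!\bigl(\pi_p(r_{\Gt}(\tilde\eta_{K'}))\bigr) \in H^3(K,\Z/p^{v_p(m)}\Z(2)),
\]
the inverse being taken modulo $p^{v_p(m)}$, which is legitimate because $[K':K]$ is coprime to $p$.

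Independence of the choice of $K'$ is then established by passing to the compositum of two choices and invoking the projection formula for $\mathrm{cor}$, reducing matters to the independence of the lift proved above. Functoriality in $K$ is inherited from the corresponding properties of $r_{\Gt}$, $\mathrm{cor}$, and $\mathrm{res}$; commutativity of the stated diagram is immediate upon taking $K'=K$ whenever $\bar\eta$ already lifts over $K$. The main technical obstacle I anticipate is the vanishing $H^i(L,Z)=0$ over a $p$-special field $L$: this is elementary in the Galois cohomology of pro-$p$ groups, but requires extra care for a possibly infinitesimal component of $Z$ in positive characteristic.
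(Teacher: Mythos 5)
Your approach is genuinely different from the paper's. You construct $\barr_{\Gt}$ by hand: lift a class in $H^1(K,\Gt/Z)$ to $H^1(K',\Gt)$ over a finite prime-to-$p$ extension $K'$, apply $r_{\Gt}$, then corestrict and divide by $[K':K]$. This is the ``elementary theory of cohomological invariants'' route, essentially what \cite[Prop.~7.2]{GaGi} does, and the paper explicitly notes that this approach requires the hypothesis that $m$ is not divisible by $\car k$. The paper instead invokes Merkurjev's structure theory of degree-$3$ invariants of (non-simply-connected) semisimple groups \cite{M:ssinv2}: it reduces the problem to computing the group $Q(G)/\Dec(G)$, establishes the needed divisibilities on $n_G$ from Dynkin-index data, and concludes from Merkurjev's main theorem. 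The two proofs are independent; the payoff of the paper's route is that it works in all characteristics.

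The gap in your proposal is exactly the point the paper is designed to circumvent. You assert $H^i(L,Z)=0$ for $i\geq 1$ over a $p$-special $L$, parenthetically claiming ``a standard argument via Kummer theory handles any infinitesimal component of $Z$ in bad characteristic.'' This is false. Suppose $\car L=q$ divides $\exp(Z)$ with $q\neq p$ (e.g.\ inner type $E_6$, $Z=\mu_3$, $m=2$, $p=2$, over a field of characteristic $3$; or type $E_7$, $Z=\mu_2$, $m=3$, $p=3$, in characteristic $2$). Then $\mu_q\subseteq Z$ is infinitesimal, and the fppf Kummer sequence gives $H^1(L,\mu_q)\cong L^\times/(L^\times)^q$, which is nonzero whenever $L$ is imperfect --- and a $p$-special closure of, say, $\F_q(t)$ is imperfect, since it is a separable algebraic extension. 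So Kummer theory \emph{produces} the obstruction rather than removing it, and the uniqueness of the lift over $L$ that you rely on fails. One might try to salvage well-definedness by instead using the known formula for how $r_{\Gt}$ changes under twisting by a class in $H^1(\cdot,Z)$ (the correction term is killed by $\exp(Z)$, hence by $\pi_p$), but that is a different argument than the one you wrote, and you would still owe a careful check that the resulting corestriction-and-divide construction is natural in $K$. As written the proposal does not establish the lemma in the generality the paper needs.
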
    

Under the additional hypothesis that $m$ is not divisible by $\car k$, this result was proved in \cite[Prop.~7.2]{GaGi} using the elementary theory of cohomological invariants from \cite{G:lens}.  We give a proof valid for all characteristics that relies on the (deeper) theory of invariants of degree 3 of semisimple groups developed in \cite{M:ssinv2}.

\begin{proof}
Put $G := \Gt/Z$.  For $G$ of inner type, this result is included in the calculations in \cite[\S4]{M:ssinv2}.  
Consulting the list of Dynkin indexes for groups of outer type from \cite{GMS}, we have $m  = 1$ except for types $^2A_n$ with $n$ even, types $^3D_4$ or $^6D_4$, and type $^2E_6$, where $m$ is respectively $2$, $3$ and $4$.  To complete the proof, we calculate the group denoted
$Q(G)/\Dec(G)$ in \cite{M:ssinv2}.  Note that $Q(G)$ may be calculated over an algebraic closure, so the calculations in \cite{M:ssinv2} show that $Q(G)$ is $(n+1)\Z q$, $2\Z q$, or $3 \Z q$ respectively.
 
 The group $\Dec(G)$ is $n_G \Z q$ where $n_G$ is the gcd of the Dynkin index of each representation $\rho$ as $\rho$ varies over the $k$-defined representations of $G$.  Clearly, $n_G$ is unchanged by replacing $G$ by a twist by a 1-cocycle $\eta \in H^1(k, G)$, and $n_{\Gt_\eta}$ divides $n_{G_\eta}$.  Consulting then the maximum values for $n_{\Gt_\eta}$ from \cite{GMS}, we conclude that $n_G$ is divisible by $2(n+1)$, 12, 12 respectively.  On the other hand, $n_G$ divides the Dynkin index of the adjoint representation, which is twice the dual Coxeter number; hence $n_G$ divides $2(n+1)$, 12, 24 respectively.  Thus the maximal divisor of $|Q(G)/\Dec(G)|$ that is relatively prime to the exponent of $Z$ divides $m$ and the claim follows from the main theorem of \cite{M:ssinv2}.
 
For completeness, we note that for type $^2E_6$, the Weyl module with highest weight $\omega_1 + \omega_6$ of dimension 650 has Dynkin index 300 \cite{McKPR}, so $n_G$ divides $\gcd(24, 300) = 12$, i.e., $n_G = 12$.
\end{proof}

\begin{defn} \label{am.def}
Suppose $G$ is an absolutely almost simple algebraic group, and put $\Gt$, $\bar{G}$ for its simply connected cover and adjoint quotient.  For $m$ as defined in Lemma \ref{rost.m}, we define:
\[
b(G) := -\barr_{\Gt}(\nu_{\bar{G}}) \quad \in H^3(k, \Z/m\Z(2)).
\]
If $t_G = 0$, we define:
\[
a(G) := -r_{\Gt}(\xi_{\bar{G}}) \quad \in H^3(k, \Z/n_{\Gt}\Z(2))
\]
for $\xi_{\bar{G}}$ as in Lemma \ref{tits.class}. 

Factoring $n_{\Gt} = mc$, we have $H^3(k, \Z/n_{\Gt}\Z(2)) = H^3(k, \Z/m\Z(2)) \oplus H^3(k, \Z/c\Z(2))$.  In case $t_G = 0$, by Lemma \ref{rost.m} the  invariants are related by the equation $a(G) = b(G) + c(G)$ for some $c(G) \in H^3(k, \Z/c\Z(2))$.
\end{defn}

\begin{table}[bth]
{\centering\noindent\makebox[450pt]{
\begin{tabular}[c]{p{2.2in}|p{3in}}

$\small{(A_n)~~}$
\begin{picture}(7,2)(0,0)
\put(0,1){\circle*{3}}
\put(0,1){\line(1,0){20}}
\put(20,1){\circle*{3}}
\put(20,1){\line(1,0){20}}
\put(40,1){\circle*{3}}
\put(40,-1.6){ \mbox{$\cdots$}}
\put(62,1){\circle*{3}}
\put(62,1){\line(1,0){20}}
\put(82,1){\circle*{3}}
\put(82,1){\line(1,0){20}}
\put(102,1){\circle*{3}}

\put(-2,-7){\mbox{\tiny $1$}}
\put(18,-7){\mbox{\tiny $2$}}
\put(38,-7){\mbox{\tiny $3$}}
\put(54,-7){\mbox{\tiny $n$$-$$2$}}
\put(75,-7){\mbox{\tiny $n$$-$$1$}}
\put(100,-7){\mbox{\tiny $n$}}
\end{picture}
\vspace{0.5cm}

&

$\small{(E_6)~~}$
\begin{picture}(7,2)(0,0)
\put(0,-5){\circle*{3}}
\put(0,-5){\line(1,0){15}}
\put(15,-5){\circle*{3}}
\put(15,-5){\line(1,0){15}}
\put(30,-5){\circle*{3}}
\put(30,10){\circle*{3}}
\put(30,-5){\line(0,1){15}}
\put(30,-5){\line(1,0){15}}
\put(45,-5){\circle*{3}}
\put(45,-5){\line(1,0){15}}
\put(60,-5){\circle*{3}}

\put(-2,-13){\mbox{\tiny $1$}}
\put(13,-13){\mbox{\tiny $2$}}
\put(28,-13){\mbox{\tiny $3$}}
\put(43,-13){\mbox{\tiny $4$}}
\put(58.5,-13){\mbox{\tiny $5$}}
\put(33,9){\mbox{\tiny $6$}}
\end{picture}

\\

$\small{(B_n)~~}$
\begin{picture}(7,2)(0,0)
\put(0,1){\circle*{3}}
\put(0,1){\line(1,0){20}}
\put(20,1){\circle*{3}}
\put(20,1){\line(1,0){20}}
\put(40,1){\circle*{3}}
\put(40,-1.6){ \mbox{$\cdots$}}
\put(62,1){\circle*{3}}
\put(62,1){\line(1,0){20}}
\put(82,1){\circle*{3}}
\put(82,2){\line(1,0){20}}
\put(82,0){\line(1,0){20}}
\put(89,-1){{\tiny\mbox{$<$}}}
\put(102,1){\circle*{3}}

\put(-2,-7){\mbox{\tiny $1$}}
\put(18,-7){\mbox{\tiny $2$}}
\put(38,-7){\mbox{\tiny $3$}}
\put(54,-7){\mbox{\tiny $n$$-$$2$}}
\put(75,-7){\mbox{\tiny $n$$-$$1$}}
\put(100,-7){\mbox{\tiny $n$}}
\end{picture}
\vspace{0.5cm}

&

$\small{(E_7)~~}$
\begin{picture}(7,2)(0,0)
\put(0,-5){\circle*{3}}
\put(0,-5){\line(1,0){15}}
\put(15,-5){\circle*{3}}
\put(15,-5){\line(1,0){15}}
\put(30,-5){\circle*{3}}
\put(30,-5){\line(1,0){15}}
\put(45,-5){\circle*{3}}
\put(45,-5){\line(1,0){15}}
\put(45,10){\circle*{3}}
\put(45,-5){\line(0,1){15}}
\put(60,-5){\circle*{3}}
\put(60,-5){\line(1,0){15}}
\put(75,-5){\circle*{3}}

\put(-2,-13){\mbox{\tiny $1$}}
\put(13,-13){\mbox{\tiny $2$}}
\put(28,-13){\mbox{\tiny $3$}}
\put(43,-13){\mbox{\tiny $4$}}
\put(58.5,-13){\mbox{\tiny $5$}}
\put(73,-13){\mbox{\tiny $6$}}
\put(47.5,9){\mbox{\tiny $7$}}
\end{picture}

\\

$\small{(C_n)~~}$
\begin{picture}(7,2)(0,0)
\put(0,1){\circle*{3}}
\put(0,1){\line(1,0){20}}
\put(20,1){\circle*{3}}
\put(20,1){\line(1,0){20}}
\put(40,1){\circle*{3}}
\put(40,-1.6){ \mbox{$\cdots$}}
\put(62,1){\circle*{3}}
\put(62,1){\line(1,0){20}}
\put(82,1){\circle*{3}}
\put(82,2){\line(1,0){20}}
\put(82,0){\line(1,0){20}}
\put(89,-1){{\tiny\mbox{$<$}}}
\put(102,1){\circle*{3}}

\put(-2,-7){\mbox{\tiny $1$}}
\put(18,-7){\mbox{\tiny $2$}}
\put(38,-7){\mbox{\tiny $3$}}
\put(54,-7){\mbox{\tiny $n$$-$$2$}}
\put(75,-7){\mbox{\tiny $n$$-$$1$}}
\put(100,-7){\mbox{\tiny $n$}}
\end{picture}
\vspace{0.5cm}

&

$\small{(E_8)~~}$
\begin{picture}(7,2)(0,0)
\put(0,-5){\circle*{3}}
\put(0,-5){\line(1,0){15}}
\put(15,-5){\circle*{3}}
\put(15,-5){\line(1,0){15}}
\put(30,-5){\circle*{3}}
\put(30,-5){\line(1,0){15}}
\put(45,-5){\circle*{3}}
\put(60,-5){\line(0,1){15}}
\put(60,-5){\circle*{3}}
\put(60,10){\circle*{3}}
\put(75,-5){\circle*{3}}
\put(75,-5){\line(1,0){15}}
\put(90,-5){\circle*{3}}
\put(45,-5){\line(1,0){15}}
\put(60,-5){\line(1,0){15}}

\put(-2,-13){\mbox{\tiny $1$}}
\put(13,-13){\mbox{\tiny $2$}}
\put(28,-13){\mbox{\tiny $3$}}
\put(43,-13){\mbox{\tiny $4$}}
\put(58.5,-13){\mbox{\tiny $5$}}
\put(73,-13){\mbox{\tiny $6$}}
\put(88,-13){\mbox{\tiny $7$}}
\put(62.5,9){\mbox{\tiny $8$}}
\end{picture}

\\

$\small{(D_n)~~}$
\begin{picture}(7,2)(0,0)
\put(0,1){\circle*{3}}
\put(0,1){\line(1,0){20}}
\put(20,1){\circle*{3}}
\put(20,1){\line(1,0){20}}
\put(40,1){\circle*{3}}
\put(40,-1.6){ \mbox{$\cdots$}}
\put(62,1){\circle*{3}}
\put(62,1){\line(1,0){20}}
\put(82,1){\circle*{3}}
\put(82,2){\line(4,3){15}}
\put(82,0){\line(4,-3){15}}
\put(96.5,12.9){\circle*{3}}
\put(96.5,-10.9){\circle*{3}}

\put(-2,-7){\mbox{\tiny $1$}}
\put(18,-7){\mbox{\tiny $2$}}
\put(38,-7){\mbox{\tiny $3$}}
\put(54,-7){\mbox{\tiny $n$$-$$3$}}
\put(86,-0.5){\mbox{\tiny $n$$-$$2$}}
\put(100,-12){\mbox{\tiny $n$}}
\put(100,11.8){\mbox{\tiny $n$$-$$1$}}
\end{picture}

&

$\small{(F_4)~~}$
\begin{picture}(7,2)(0,0)
\put(0,1){\circle*{3}}
\put(0,1){\line(1,0){15}}
\put(15,1){\circle*{3}}
\put(15,1){\line(1,0){15}}
\put(30,1){\circle*{3}}
\put(30,1){\line(1,0){15}}
\put(45,1){\circle*{3}}

\put(-2,-7){\mbox{\tiny $1$}}
\put(13,-7){\mbox{\tiny $2$}}
\put(28,-7){\mbox{\tiny $3$}}
\put(43,-7){\mbox{\tiny $4$}}

\put(65,1){$\small{(G_2)~~}$}
\put(92,1){\circle*{3}}
\put(92,0.1){\line(1,0){15}}
\put(92,1.1){\line(1,0){15}}
\put(92,2.1){\line(1,0){15}}
\put(107,1){\circle*{3}}

\put(90,-7){\mbox{\tiny $1$}}
\put(105,-7){\mbox{\tiny $2$}}
\end{picture}

\end{tabular}
}}
\caption{Dynkin diagrams of simple root systems, with simple roots numbered}
\end{table}

\section{Tits $p$-indexes of classical groups} \label{classical.sec}

As envisioned by Siegel and Weil \cite{Weil}, classical groups can be described over a field $k$ of odd characteristic as automorphism groups of central simple algebras with involutions. Recall that a simple $k$-algebra is said to be central if its $k$-dimension is finite and if its center is $k$. The degree $\deg(A)$ of a central simple algebra is the square root of its dimension and its index $\ind(A)$ is the degree of a division $k$-algebra Brauer equivalent to $A$.

An involution $\sigma$ on a central simple algebra $A$ is a $k$-antiautomorphism of order $2$. Following \cite{KMRT}, the index $\ind(A,\sigma)$ of a central simple algebra is a set of the reduced dimensions of the $\sigma$-isotropic right ideals in $A$. As \cite[§6]{KMRT} states, $\ind(A,\sigma)$ is of the form $\{0,\ind(A),...,r\ind(A)\}$, where $r$ is the Witt index of an (skew-)hermitian form attached to $(A,\sigma)$.

\begin{defi}
Let $p$ be a prime and $(A,\sigma)$ be a central simple $k$-algebra with involution. The $p$-index of $(A,\sigma)$ is the union of the indexes of $A_E$, where $E$ runs through all coprime to $p$ field extensions of $k$.
\end{defi}

Entirely analogous statements hold for a central simple algebra with quadratic pair $(A, \sigma, f)$ as defined in \cite{KMRT}; one simply adapts the notion of isotropic right ideal and $\ind(A, \sigma, f)$ as in \cite[pp.~73, 74]{KMRT}.

In this section we will list the possible Tits $p$-indexes for the classical groups and relate them to the index of a corresponding central simple algebra with involution or quadratic pair.  The actual list of indexes is identical with the one in \cite{Ti:Cl}; the core new material here is the proof of existence of such indexes over $2$-special fields given in the next section.  We do not discuss motivic equivalence for classical groups here, because the cases of special orthogonal groups and groups of type $^1A_n$ were handled in \cite{clercq:motivicequivalence}.


\subsection{Type $^{1\!\!}A_n$}

An absolutely simple simply connected groups of type $^{1\!\!}A_n$ is isomorphic to the special linear group $\Sl_1(A)$ of a degree $n+1$ central simple $k$-algebra $A$. 
The coprime-to-$p$ components of $A$ vanishes over a $p$-special closure of $k$, hence denoting by $d_p$ the integer $p^{v_p(\ind(A))}$ we get the following description of the $p$-indexes of type $^{1\!\!}A_n$.\\

{\centering\noindent\makebox[360pt]{
  \includegraphics[width=14cm]{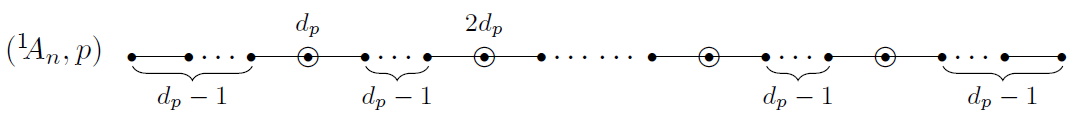}
}}

\vspace{0.5cm}
\emph{Distinguished orbits}: $\delta_0^p(\Sl_1(A))=\{d_p,2d_p,...,n+1-d_p\}$.\\

The twisted flag varieties of type $^{1\!}A_n$ are isomorphic to the varieties of flags of right ideals of fixed dimension \cite{MPW1}. Note that any power of $p$ may be realized as the integer $d_p$ for a central simple algebra defined over a suitable field $k$.

\subsection{Type $^{2\!\!}A_n$}

The absolutely simple simply connected groups of type $^{2\!\!}A_n$ correspond to the special unitary groups $\su(A,\sigma)$, where $(A,\sigma)$ is a central simple algebra of degree $n+1$ with involution of the second kind (recall that in this case $A$ is not central simple over $k$). As in \ref{qs.type}, we need only consider the prime 2.
We denote by $d$ the index of $A$ and $r$ is the integer such that $\ind_2(A,\sigma)=\{d_2,2d_2,...,rd_2\}$.\\

{\centering\noindent\makebox[360pt]{
  \includegraphics[width=14cm]{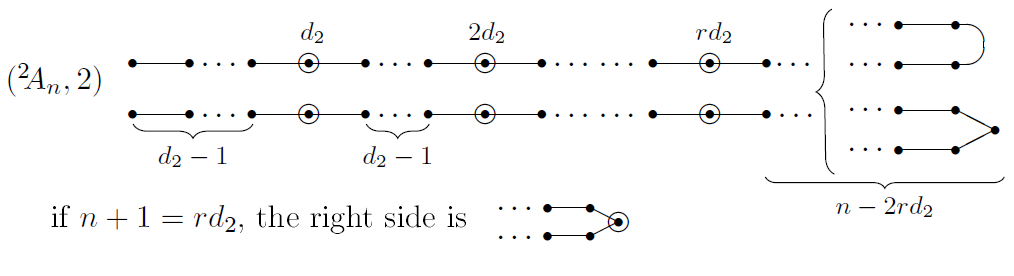}
}}

\emph{Distinguished orbits}: $\delta_0^2(G)=\{\{d_2,n+1-d_2\},\{2d_2,n+1-2d_2\},...,\{rd_2,n+1-rd_2\}\}$.\\

The associated twisted flag varieties are described in \cite{MPW1}. We will show in \S\ref{Tignol.sec} that any of such Tits $2$-index can be realized by a group of type $^{2\!}A_n$ defined over a suitable field.



\subsection{Type $B_n$}

An absolutely simple simply connected group of type $B_n$ is isomorphic to the spinor group $\Spin(V,q)$ of a $(2n+1)$-dimensional quadratic space $(V,q)$ (the adjoint groups of type $B_n$ correspond to the special orthogonal groups). The \emph{Witt index} of the quadratic space $(V,q)$ is denoted by $i_w(q)$ and the only torsion prime here is $2$.\\

{\centering\noindent\makebox[360pt]{
  \includegraphics[width=11cm]{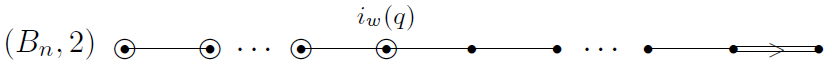}
}}




 










\emph{Distinguished orbits}: $\delta_0^2(G)=\{1,2,...,i_w(q)\}$.\\

The Tits $2$-index coincides with the classical Tits index for such groups by Springer's theorem (see \cite{Sp:conjecture}, \cite[Corollary 18.5]{EKM} for a characteristic-free proof). It is thus known from Tits classification that any of such Tits $2$-index can be achieved as the index of the spinor group of some quadratic space $(V,q)$.


\subsection{Type $C_n$}

Up to isomorphism, an absolutely simple simply connected group of type $C_n$ is the symplectic group $\Sp(A,\sigma)$ associated to a central simple $k$-algebra of degree $2n$ with symplectic involution. As previously the only torsion prime here is $2$ and we denote the $2$-index $\ind_2(A,\sigma)=\{d,2d,...,rd\}$, where $d$ is the index of $A$.\\

{\centering\noindent\makebox[360pt]{
  \includegraphics[width=12cm]{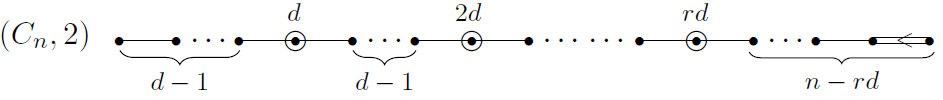}
}}

\begin{tabular}{ll}
if $n=rd$, the right side becomes&

\begin{picture}(7,2)

\put(0,-0.5){\mbox{$\cdots$}}

\put(17,2){\circle*{3}}
\put(17,2){\line(1,0){20}}
\put(37,2){\circle*{3}}
\put(57,2){\circle*{3}}
\put(57,2){\circle{7}}
\put(37,3){\line(1,0){20}}
\put(37,1){\line(1,0){20}}
\put(44,0.2){\mbox{\scriptsize{\textbf{$<$}}}}

\end{picture}
\end{tabular}

\vspace{0,8cm}

\emph{Distinguished orbits}: $\delta_0^2(G)=\{d,2d,...,rd\}$.\\

The twisted flag varieties for such groups correspond to varieties of flags of $\sigma$-isotropic subspaces of fixed dimension \cite{MPW1}. We will show in \S\ref{Tignol.sec} how to construct symplectic groups of any such prescribed $2$-index over suitable fields.


\subsection{Type $^{1\!}D_n$}
Over a base field $k$ of characteristic $\ne 2$, absolutely simple simply connected algebraic group of type $^{1\!\!}D_n$ are described as spinor groups $\Spin(A,\sigma)$ of a $2n$-degree central simple $k$-algebras $(A,\sigma)$ with orthogonal involution of trivial discriminant. The suitable generalization to include $k$ of characteristic $2$ is the notion of algebra with quadratic pair $(A, \sigma, f)$ as in \cite{KMRT}. We keep the same notations as before for the indexes of $A$, $(A, \sigma)$, and $(A,\sigma, f)$.\\

{\centering\noindent\makebox[360pt]{
  \includegraphics[width=13cm]{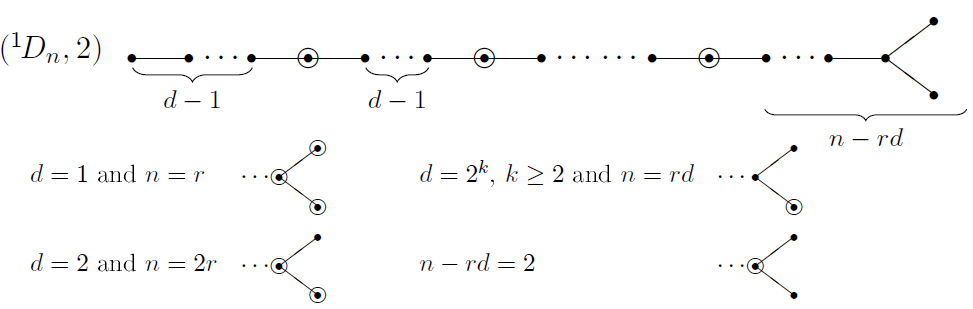}
}}

\emph{Distinguished orbits}: $\delta_0^2(G)=\{d,2d,...,rd\}$.\\

As detailed in \cite{MPW1} when $\car k \ne 2$, the associated twisted flag varieties are the varieties of flags of $\sigma$-isotropic subspaces of prescribed dimension.  This same description holds in all characteristics (replacing $(A, \sigma)$ with $(A, \sigma, f)$), as can be seen in case $A$ is split as in \cite[pp.~258--262]{Borel} and by Galois descent for general $A$, cf.~\cite[p.~219]{CG}.

Any such $2$-index can be realized as the index of the spinor group of a central simple $k$-algebra with orthogonal involution of trivial discriminant over a suitable field.


\subsection{Type $^{2\!}D_n$}

Absolutely simple simply connected algebraic groups of type $^{2\!}D_n$ are described by spinor groups of $2n$-degree central simple $k$-algebras endowed with an orthogonal involution of non-trivial discriminant (again, this notion is replaced by quadratic pairs to cover base fields of characteristic $2$). We denote here the discrete invariants associated to algebras with involution in the same way as for $^{1\!\!}D_n$, and the only torsion prime is $2$. As for the $^{1\!\!}D_n$ case, the twisted flag varieties for such groups are described in \cite{MPW1} and for any such prescribed $2$-index can be associated to a suitable spinor group.\\

{\centering\noindent\makebox[360pt]{
  \includegraphics[width=13cm]{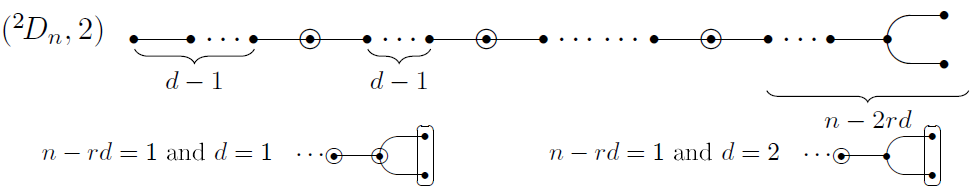}
}}

\emph{Distinguished orbits}: $\delta_0^2(G)=\{d,2d,...,rd\}$ if $rd<n-1$. As soon as $rd=n-1$, $\delta_0^2(G)=\{d,2d,...,(r-1)d,\{n-1,n\}\}$\\

\section{Tignol's construction} \label{Tignol.sec}

To complete the determination of all the values of the Tits $p$-indexes of classical groups, it remains to show that each of the previously-announced indices can be realized by suitable absolutely simple groups. Recall that a central simple algebra with involution $(A,\sigma)$ is adjoint to a (skew)-hermitian form $h_{\sigma}$ on a right $D$-module, where $D$ is a division algebra Brauer-equivalent to $A$. Adding hyperbolic planes to $h_{\sigma}$, the problem is reduced to the construction of \emph{anisotropic} central simple algebras with involutions $(A,\sigma)$ of any kind with $A$ of any index a power of $2$ over $2$-special fields. We reproduce here a construction of such algebras with involutions which is due to Jean-Pierre Tignol.

Let $\Gamma_n$ be a product of $n$ copies of $\mathbb{Z}_{(2)}$, the ring of rational numbers with odd denominators. For any field $K$, consider the field $K_n$ of power series $\sum_{\gamma\in \Gamma_n}a_{\gamma}x^{\gamma}$ whose support is well ordered with respect to the lexicographical order \cite{EnglerPrestel:valued}. The field $K_n$ is endowed with the valuation $v:K_n\longrightarrow \Gamma_n\cup \{\infty\}$ which sends an element to the least element of its support.

\begin{lem}
If $K$ is $2$-special, then $K_n$ is also $2$-special.
\end{lem}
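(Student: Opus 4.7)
The plan is to exploit the valuation-theoretic structure of $K_n$: as a Hahn generalized power series field, $K_n$ is maximally complete (in particular henselian) with value group $\Gamma_n = \mathbb{Z}_{(2)}^n$ and residue field $K$. For any finite extension $L/K_n$ the valuation extends uniquely, and Ostrowski's theorem gives $[L:K_n] = efd$, where $e = [\Gamma_L : \Gamma_n]$, $f = [\kappa_L : K]$, and the defect $d$ is a power of the residue characteristic (equal to $1$ in residue characteristic $0$). It will suffice to check that each of $e$, $f$, $d$ is a power of $2$.

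The factor $f$ is handled immediately: $\kappa_L/K$ is a finite extension of the $2$-special field $K$. For $e$, the key structural feature of $\Gamma_n$ is that the torsion of $(\Gamma_n \otimes_{\mathbb{Z}} \mathbb{Q})/\Gamma_n$ is $2$-primary. Indeed, any element of $\mathbb{Q}^n$ whose denominator (in lowest form) has only odd prime factors already lies in $\mathbb{Z}_{(2)}^n = \Gamma_n$, so for every $\gamma \in \Gamma_L \subseteq \mathbb{Q}^n$ the order of $\gamma$ modulo $\Gamma_n$ is a power of $2$, and thus $e = [\Gamma_L : \Gamma_n]$ is a power of $2$.

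The main obstacle will be the defect $d$. When $\mathrm{char}(K) = 0$, Ostrowski's theorem gives $d = 1$ for free; when $\mathrm{char}(K) = 2$, the same theorem shows $d$ is already a power of $2$. The delicate case is $\mathrm{char}(K) = p$ for $p$ an odd prime, where \emph{a priori} $d$ could be a nontrivial power of $p$. Here I would invoke Kaplansky's theorem on maximally complete fields: $K_n$ is maximally complete, $\Gamma_n$ is $p$-divisible (since $1/p \in \mathbb{Z}_{(2)}$ for odd $p$), and the residue field $K$ must be perfect---otherwise $K$ would admit an odd $p$-power degree purely inseparable extension, contradicting $2$-specialness. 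Kaplansky's theorem then ensures $K_n$ is defectless, so $d = 1$.

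Putting these together, $[L:K_n] = efd$ is always a power of $2$, which is exactly the statement that $K_n$ is $2$-special.
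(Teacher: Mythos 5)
Your proof is correct and follows essentially the same valuation-theoretic strategy as the paper: extend the valuation to a finite extension $L$ of $K_n$, decompose $[L:K_n]$ into ramification, residue, and defect contributions, and show each factor is a $2$-power using (i) that $K$ is $2$-special for the residue degree and (ii) that $\Gamma_n = \mathbb{Z}_{(2)}^n$ has only $2$-power torsion in $\mathbb{Q}^n/\Gamma_n$ for the ramification index.

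The one place where you diverge is the treatment of the defect. The paper simply cites that $K_n$, being maximally complete, satisfies the fundamental \emph{equality} $[L:K_n] = [\bar{L}:K]\cdot(v(L^\times):v(K_n^\times))$ (Schilling, Ch.~2); this is the classical Ostrowski--Krull theorem that maximally complete fields are defectless in every characteristic, so the defect never enters. You instead invoke Ostrowski's lemma to write $[L:K_n]=efd$ and then argue case-by-case on $\car K$, appealing to Kaplansky's theorem (with the nice observation that a $2$-special field of odd characteristic $p$ must be perfect, since an imperfect field would have a degree-$p$ purely inseparable extension) to kill $d$ when $\car K$ is odd. This works, but it is more machinery than needed: defectlessness of maximally complete fields requires no hypothesis on the residue field or on divisibility of the value group, so your odd-characteristic case collapses into the same one-line citation as the others. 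Your version does have the minor advantage of making explicit that one need not worry about inseparable extensions of $K_n$ when $\car K\ne 2$, a point the paper leaves implicit by restricting attention to separable $L$ without comment.
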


\begin{proof}Let $L$ be a finite separable field extension of $K_n$. The valuation $v$ extends uniquely to $L$ and $K_n$ is maximally complete, hence the following equality holds \cite[Ch. 2]{schi:valuations}.
$$[L:K_n]=[\bar{L}:K]\cdot (v(L^{\times}):v(K_n^{\times}))$$
The field $K$ being assumed to be $2$-special, $[\bar{L}:K]$ is a power of $2$. Moreover the quotient group $v(L^{\times})/v(K_n^{\times})$ is torsion \cite[Theorem 3.2.4]{EnglerPrestel:valued} and $v(K_n^{\times})$ is $\Gamma_n$, hence the order of the quotient group $v(L^{\times})/v(K_n^{\times})$ is a power of $2$.
\end{proof}

We now describe Tignol's procedure to construct from any $K$-division algebra with involution $(D,\sigma)$ (of any kind) a family of anisotropic algebras with involutions of the same kind over $K_n$.

\begin{prop}\label{tignol}Let $M$ be a right $D_{K_n}$-module of rank $k$ which is at most $n$. The hermitian form
$$\begin{array}{ccccc}
h_k & : & M\times M & \longrightarrow & D_{K_n} \\
 & & (a_1,...,a_k,b_1,...b_k) & \mapsto & \sum_{i=1}^k\sigma_{K_n}(a_i)x^{\varepsilon_i}b_i \\
\end{array}$$
where $\varepsilon_i$ is the n-uple whose only non-zero entry is $1$ at the $i$-th position is anisotropic.
\end{prop}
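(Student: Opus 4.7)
The plan is to extend the valuation $v$ on $K_n$ to the division algebra $D_{K_n} = D \otimes_K K_n$ and then exploit that the exponents $\varepsilon_1,\dots,\varepsilon_k$ lie in pairwise distinct cosets of $2\Gamma_n$, so that the valuations of the summands of $h_k(a,a)$ cannot coincide.

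First I would set up the valuation-theoretic framework. Since $K_n$ is maximally complete with residue field $K$ and value group $\Gamma_n$, the central simple $K$-algebra $D$ lifts inertially to $D_{K_n}$, which remains a division algebra carrying a valuation $v$ that extends the one on $K_n$ with the same value group $\Gamma_n$ and with residue division algebra $D$. This extension is unique — this is part of the fundamental equality already invoked in the proof of the preceding lemma — so $v \circ \sigma_{K_n}$, being another extension of $v$ from $K_n$ to $D_{K_n}$, must coincide with $v$. Hence $v(\sigma_{K_n}(a)) = v(a)$ for every $a \in D_{K_n}$; in the second-kind case, the same uniqueness argument applies once one observes that $\sigma_{K_n}$ already preserves the valuation on the quadratic étale center $Z(D) \otimes_K K_n$.

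Next, for any nonzero $a \in D_{K_n}$ one has $v(\sigma_{K_n}(a)\, x^{\varepsilon_i}\, a) = 2v(a) + \varepsilon_i$. The combinatorial heart of the proof is that $\varepsilon_1,\dots,\varepsilon_k$, being part of the standard basis of $\mathbb{Z}_{(2)}^n = \Gamma_n$, have pairwise distinct images in the quotient $\Gamma_n/2\Gamma_n \cong (\mathbb{Z}/2\mathbb{Z})^n$. Therefore, for indices $i \ne j$ with $a_i, a_j \ne 0$, the values $2v(a_i)+\varepsilon_i$ and $2v(a_j)+\varepsilon_j$ lie in distinct cosets of $2\Gamma_n$ and so cannot coincide in $\Gamma_n$. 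If $(a_1,\dots,a_k) \ne 0$, the nonzero summands of
\[
h_k(a,a) = \sum_{i=1}^k \sigma_{K_n}(a_i)\, x^{\varepsilon_i}\, a_i
\]
therefore have pairwise distinct valuations, and by the standard ultrametric inequality $v(h_k(a,a))$ equals the minimum of these, so $h_k(a,a) \ne 0$. This establishes anisotropy.

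I expect the main obstacle to be the first step, namely cleanly establishing that $D_{K_n}$ is again a division algebra, that $v$ extends uniquely to it, and that $\sigma_{K_n}$ preserves this extension — especially in the second-kind case, where one needs to control the quadratic étale center under base change to $K_n$. Once the valuation-theoretic setup is in place, the remainder is a clean coset-counting argument in $\Gamma_n/2\Gamma_n$, and the constraint $k \le n$ enters precisely because it guarantees that the $k$ standard basis vectors used as exponents are indeed pairwise incongruent modulo $2\Gamma_n$.
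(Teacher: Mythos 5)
Your proposal is correct and follows essentially the same route as the paper: extend the valuation from $K_n$ to $D_{K_n}$ in a $\sigma$-invariant way, observe that $v(\sigma_{K_n}(a_i)x^{\varepsilon_i}a_i)=\varepsilon_i+2v(a_i)$ lies in the coset $\varepsilon_i+2\Gamma_n$, and conclude from the pairwise distinctness of these cosets that the ultrametric bound is an equality, forcing all $a_i$ to vanish. The only difference is cosmetic: the paper simply writes down the extension $v(d\otimes\lambda)=v(\lambda)$ and asserts $\sigma$-invariance, whereas you derive $\sigma$-invariance from uniqueness of the extension over the maximally complete base; both are standard and lead to the same computation.
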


\begin{proof}
Setting $v(d\otimes \lambda)=v(\lambda)$ for any $d\in D^{\times}$ and $\lambda\in K_n^{\times}$, the valuation $v$ extends to a $\sigma$-invariant valuation on $D_{K_n}$. One observe that for any element $a$ of $D_{K_n}^{\times}$, $v(\sigma_{K_n}(a)x^{\varepsilon_i}a)=\varepsilon_i+2v(a)$ belongs to $\varepsilon_i + 2\Gamma_n$ and thus
\[
v\left( \sum_{i=1}^k\sigma_{K_n}(a_i)x^{\varepsilon_i}a_i\right)=\min\{\varepsilon_i+2v(a_i), i=1,...,k\}.
\]
It follows that if $h_k(a_1,...,a_k,a_1,...,a_k)=0$, then $a_i=0$ for all $i=1,...,k$.
\end{proof}

\begin{cor}
Each of the previously described indices of type $(^{2\!}A_n,2)$, $(C_n,2)$, $(^{1\!\!}D_n,2)$, $(^{2\!}D_n,2)$ is the Tits $2$-index of a semisimple algebraic group defined on a suitable field.
\end{cor}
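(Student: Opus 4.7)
The strategy is to combine Tignol's construction (Proposition~\ref{tignol}) with Witt decomposition to realise each prescribed $2$-index by an explicit central simple algebra with involution (or quadratic pair) over a suitable $2$-special field. Fixing a type and a candidate $2$-index amounts to prescribing the degree $N$ of the underlying algebra, its index $d$ (a power of $2$), the Witt index $r$ (with $2rd \leq N$), and, in the orthogonal cases, the discriminant class; set $m := N/d$.

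I will first construct, over some $2$-special base field $F$, a division $F$-algebra $(D, \sigma)$ of degree $d$ carrying an involution of the prescribed kind. For involutions of the first kind, an iterated tensor product of quaternion algebras with generic entries produces a division algebra of the required $2$-power degree, and the parity of the resulting involution (symplectic vs.\ orthogonal) is controlled by the number of factors equipped with the canonical symplectic involution. For involutions of the second kind, one performs a further separable quadratic extension of the centre. In each case, iterating the preceding lemma ensures that the base field $F$ itself is $2$-special.

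Next, pass to the $2$-special field $K := F_n$ for any $n \geq m - 2r$ and apply Proposition~\ref{tignol} to $(D_K, \sigma_K)$ with rank $k = m - 2r$; this yields an anisotropic (skew-)hermitian form $h_{m-2r}$ on $(D_K)^{m-2r}$. In the orthogonal cases, the discriminant of $h_{m-2r}$ can be forced to be trivial or non-trivial as prescribed, either by scaling one diagonal entry in the explicit formula of Proposition~\ref{tignol} by a representative of a suitable square class of $K^{\times}$, or by precomposing $\sigma$ with an appropriate inner automorphism. Setting $h := h_{m-2r} \perp r \cdot \mathbb{H}$, where $\mathbb{H}$ denotes the hyperbolic $D_K$-plane, produces a (skew-)hermitian form of rank $m$ and Witt index $r$; its adjoint algebra $(A, \sigma_A)$ (or $(A, \sigma_A, f_A)$ in the orthogonal cases) is central simple of degree $N$, has index $d$ and $\ind(A, \sigma_A) = \{0, d, 2d, \ldots, rd\}$, and has the prescribed discriminant. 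The associated simply connected group $\su(A, \sigma_A)$, $\Sp(A, \sigma_A)$, $\Spin(A, \sigma_A)$, or $\Spin(A, \sigma_A, f_A)$ then has the prescribed Tits $2$-index by the descriptions recorded in Section~\ref{classical.sec}.

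The main technical obstacle is the initial step of producing division algebras of arbitrary $2$-power index carrying involutions of each of the four prescribed kinds over a $2$-special field, together with the verification that such an algebra remains division of the full degree after passing to the Hahn series extension $K = F_n$; once these points are settled, the remainder is a routine combination of Tignol's construction with Witt decomposition.
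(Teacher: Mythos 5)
Your overall strategy coincides with the paper's: reduce via Witt decomposition to the construction of \emph{anisotropic} algebras with involution of any kind and $2$-power index over $2$-special fields, then apply Tignol's construction (Proposition~\ref{tignol}). You also add a useful refinement that the paper leaves implicit, namely how to force the discriminant of the orthogonal form to be trivial or nontrivial by scaling a diagonal entry of $h_k$, which is what one needs to hit both $^{1\!}D_n$ and $^{2\!}D_n$. However, the step you flag at the end as the ``main technical obstacle'' is precisely what the proof must supply, and you do not resolve it. The paper handles it as follows: choose a purely transcendental extension $K(x_1,\ldots,x_s)$, over which one builds a division algebra $(D,\sigma)$ of the desired kind and $2$-power degree (e.g.\ a tensor product of generic quaternion algebras, with a quadratic extension of the centre in the unitary case); then pass to a $2$-special closure $L$ of $K(x_1,\ldots,x_s)$. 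Because $L$ is a direct limit of odd-degree extensions and the index of $D$ is a power of $2$, $D_L$ remains division of full degree. Only then does one apply Proposition~\ref{tignol} with base field $L$, producing the anisotropic $(M_k(D_{L_n}),\sigma_{L_n})$. Your second worry --- that $D$ might not remain division after Hahn-series extension --- is in fact automatic from the proof of Proposition~\ref{tignol}: the valuation on $D_{K_n}$ constructed there forces $D_{K_n}$ to be a division algebra.

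One further slip: you set $K := F_n$ and then ``apply Proposition~\ref{tignol} to $(D_K,\sigma_K)$,'' claiming the output is a form on $(D_K)^{m-2r}$. As stated, Proposition~\ref{tignol} takes a division algebra over $K$ and produces a form over $D_{K_n}$, so your setup would land you over $D_{(F_n)_n}$. The intended application is to keep $D$ over the $2$-special field $F$ (or $L$, in the paper's notation) and read off the anisotropic form over $D_{F_n}$.
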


\begin{proof}
As seen in a previous discussion, it suffices to construct over $2$-special fields algebras with anisotropic involutions $(A,\sigma)$ of any kind, where the index of $A$ can be any power of $2$.

Take a sufficiently large transcendental field extension $K(x_1,...,x_s)$ of a field $K$, over which we can consider a division algebra with involution $(D,\sigma)$ of any kind. (For instance, $D$ may be chosen to be a tensor product of quaternion algebras.) Writing $L$ for the $2$-special closure of $K(x_1,...,x_s)$, we can apply Tignol's procedure to $(D_L,\sigma_L)$. Proposition \ref{tignol} gives rise to an anisotropic algebra with involution $(M_k(D_{L_n}),\sigma_{L_n})$ which is of the same kind as $(D,\sigma)$ and thus fulfills the required assumptions.
\end{proof}

\section{Tits $p$-indexes of exceptional groups}
In this section we will list the possible Tits $p$-indexes of \emph{exceptional} groups, meaning groups of the types omitted from \S\ref{classical.sec}.

\subsection{The cases $(G_2, 2)$, $(^3D_4, 3)$, $(F_4, 3)$, and $(E_8, 5)$} \label{ht1.ssec}

\setlength{\unitlength}{.5cm}

\newsavebox{\Esev}
\savebox{\Esev}(7,1.9){\begin{picture}(7,1.9)
    \multiput(1,0.7)(1,0){6}{\circle*{\darkrad}}
     \put(3,1.45){\circle*{\darkrad}}
    \put(1,.7){\line(1,0){5}}
    \put(3,1.45){\line(0,-1){0.75}}
\end{picture}}

\newsavebox{\Eviii}
\savebox{\Eviii}(8,1.9){\begin{picture}(8,1.9)
    \multiput(1,0.7)(1,0){7}{\circle*{\darkrad}}
     \put(3,1.45){\circle*{\darkrad}}
    \put(1,.7){\line(1,0){6}}
    \put(3,1.45){\line(0,-1){0.75}}
\end{picture}}

\newsavebox{\dEpic}
\savebox{\dEpic}(3.5,1){\begin{picture}(3.5, 1)
    \multiput(1.75,0.75)(0.75,0){2}{\circle*{\darkrad}}
    \multiput(1.75,0.25)(0.75,0){2}{\circle*{\darkrad}}
    \multiput(0.25,0.5)(0.75,0){2}{\circle*{\darkrad}}
    
    \put(0.25, 0.5){\line(1,0){0.75}}
    \put(1.75,0.75){\line(1,0){0.75}}
    \put(1.75, 0.25){\line(1,0){0.75}}
    
    \put(1.75,0.5){\oval(1.5,0.5)[l]}\end{picture}}
\newsavebox{\iEpic}
\savebox{\iEpic}(2.5,1){\begin{picture}(2.5,1)
    \multiput(0.25,0.25)(0.5,0){5}{\circle*{\darkrad}}
    \put(1.25,0.75){\circle*{\darkrad}}

    \put(0.25,0.25){\line(1,0){2}}
    \put(1.25,0.75){\line(0,-1){0.5}}
    
    \put(0.25,0.25){\circle{\lrad}}
    \put(2.25,0.25){\circle{\lrad}}
\end{picture}}

We now consider some groups $G$ relative to a prime $p$ in cases where $p$ does not divide the exponent of the center of the simply connected cover $\Gt$ and the Dynkin index $n_{\Gt}$ factors as $cp$ for some $c$ not divisible by $p$.  Definition \ref{am.def} then gives an element $b(G) \in H^3(k, \Z/p\Z(2))$ depending only on $G$.

\begin{prop} \label{ht1}
If the quasi-split type of $G$ and $p$ are one of $(G_2, 2)$, $(^3D_4, 3)$, $(F_4, 3)$, or $(E_8, 5)$, then the following are equivalent:
\begin{enumerate}
\item \label{ht1.split} $G$ is quasi-split by a finite separable extension of $k$ of degree not divisible by $p$.
\item \label{ht1.iso} $G$ is isotropic over a finite separable extension of $k$ of degree not divisible by $p$.
\item \label{ht1.zero} $b(G) = 0$.
\end{enumerate}
And for $G$ of type $G_2$, $F_4$, or $E_8$ the preceding are equivalent also to:
\begin{enumerate}
\setcounter{enumi}{3}
\item \label{ht1.motive} The Chow motive with $\F_p$ coefficients of the variety of Borel subgroups of $G$ is a sum of Tate motives.
\end{enumerate}
Moreover, for every field $k$, there exists a $p$-special field $F \supseteq k$ and an anisotropic $F$-group of the same quasi-split type as $G$.
\end{prop}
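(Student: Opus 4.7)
The strategy is to realize each of the four quasi-split types over a $p$-special field via a group whose $b$-invariant is nonzero, and then invoke the equivalence \ref{ht1.split} $\Leftrightarrow$ \ref{ht1.zero} already proved above to deduce anisotropy. Concretely, I would first produce a field $k' \supseteq k$ and a group $G'$ over $k'$ of the prescribed quasi-split type with $b(G') \ne 0$ in $H^3(k', \Z/p\Z(2))$. For $(G_2, 2)$, $(F_4, 3)$, $(E_8, 5)$ the quasi-split form is split, so one takes $k'$ to be the function field of a versal $G_{\mathrm{split}}$-torsor (the generic fiber of $V/G_{\mathrm{split}}$ for a generically free linear representation $V$) and $G'$ to be the associated twist; for $(^3D_4, 3)$ one first enlarges $k$ so as to admit a cyclic Galois cubic extension, forms the corresponding quasi-split $^3D_4$, and applies the same versal construction. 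That some versal twist has nonzero $b$-invariant is classical in each of the four cases --- for $(G_2, 2)$, the octonion algebra whose norm form is the generic $3$-Pfister form $\pform{t_1, t_2, t_3}$ has nonzero Arason invariant, and the analogous generic Albert, trialitarian, and $E_8$ constructions cover the remaining three (cf.\ \cite{GMS}).

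Now let $F$ be a $p$-special closure of $k'$. Since $F$ is a filtered direct limit of finite separable extensions of $k'$ of degree coprime to $p$ and $H^3(-, \Z/p\Z(2))$ is $p$-torsion, the restriction-corestriction relation $\mathrm{cor}\circ\mathrm{res}=[E:k']$ shows that $H^3(k', \Z/p\Z(2)) \to H^3(F, \Z/p\Z(2))$ is injective, whence $b(G'_F) \ne 0$. Because $F$ is $p$-special, the only finite separable extension of $F$ of degree coprime to $p$ is $F$ itself, so conditions \ref{ht1.split} and \ref{ht1.iso} applied to $G'_F$ reduce to ``$G'_F$ is quasi-split'' and ``$G'_F$ is isotropic'' respectively. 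Combining the implication \ref{ht1.zero} $\Rightarrow$ \ref{ht1.split} with $b(G'_F) \ne 0$ forces $G'_F$ not to be quasi-split, and then \ref{ht1.split} $\Leftrightarrow$ \ref{ht1.iso} forces $G'_F$ to be anisotropic, as required.

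\textbf{Main obstacle.} The only substantive ingredient is the nontriviality of the Rost (equivalently $b$) invariant on a versal torsor in each of the four cases; all remaining steps are formal consequences of the equivalences established earlier in the proposition and the standard restriction-corestriction argument for $p$-primary Galois cohomology. A minor additional bookkeeping point is that for $(^3D_4, 3)$ one must first guarantee the existence of a cyclic cubic Galois extension in order to define the quasi-split form, but any coprime-to-$3$ preliminary extensions used for this purpose are absorbed when one later passes to a $3$-special closure.
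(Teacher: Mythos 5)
The proposal addresses only the final ``Moreover, for every field $k\ldots$'' existence claim and explicitly treats the four-way equivalence \eqref{ht1.split}$\Leftrightarrow$\eqref{ht1.iso}$\Leftrightarrow$\eqref{ht1.zero}$\Leftrightarrow$\eqref{ht1.motive} as ``already proved above.'' That equivalence is the main content of the proposition, and nothing in your write-up supplies it, so this is a genuine gap. To close it you need, after reducing to $k$ $p$-special: for \eqref{ht1.iso}$\Rightarrow$\eqref{ht1.split}, a case-by-case inspection of Tits's classification showing that each possible nontrivial semisimple anisotropic kernel (rank-one transfer for $^3D_4$, $B_3$ for $F_4$, $E_7/D_7/E_6/D_6/D_4$ for $E_8$) is itself isotropic over a $p$-special field for the relevant $p$, so that any isotropic $G$ is in fact quasi-split; for \eqref{ht1.zero}$\Rightarrow$\eqref{ht1.split}, the reduction (using $t_G = 0$) to the known fact that the Rost invariant of the quasi-split simply connected group of these types has zero kernel (Chernousov for $E_8$ mod $5$, etc.); and for \eqref{ht1.motive}$\Rightarrow$\eqref{ht1.split}, a citation to the theorem on $J$-invariants.

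For the part you do treat, the argument is correct and follows essentially the same route as the paper: choose a versal torsor to produce $G'$ over $E \supseteq k$ with $b(G') \ne 0$, pass to a $p$-special closure $F$ of $E$, observe via restriction--corestriction that $H^3(E, \Z/p\Z(2)) \to H^3(F, \Z/p\Z(2))$ is injective so $b(G'_F) \ne 0$, and then conclude anisotropy from the (assumed) equivalence. Your extra remarks --- the explicit $3$-Pfister realization for $G_2$, the cyclic-cubic bookkeeping for $^3D_4$, and the spelled-out restriction--corestriction argument --- are all fine and slightly more detailed than the paper's one-line treatment, but they do not compensate for the missing proof of the equivalences.
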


\begin{proof}
It is harmless to assume that $k$ is $p$-special.  Suppose \eqref{ht1.iso}, that $G$ is $k$-isotropic.  When we consult the list of possible Tits indexes from \cite{Ti:Cl}, we find that for $G$ of type $G_2$, $G$ is necessarily split.  If $G$ has type $^3D_4$, then the only other possibility is that the semisimple anisotropic kernel is isogenous to a transfer $R_{L/k}(A_1)$ where $L$ is cubic Galois over $k$.  But $L$ has no separable field extensions of degree 2, so a group of type $A_1$ is isotropic, hence $G$ is split.
If $G$ has type $F_4$, the only possibility has type $B_3$, which is isotropic, hence again \eqref{ht1.split}.  If $G$ has type $E_8$, the possibilities are $E_7$, $D_7$, $E_6$, $D_6$, or $D_4$, and all such groups are isotropic as in Table \ref{SG.table}.  Thus, \eqref{ht1.iso} implies \eqref{ht1.split}.

Assume now \eqref{ht1.zero}.  As $k$ is $p$-special (and because of our choice of $G$), $t_G = 0$, so we are reduced to showing that, for $\Gt^q$ the quasi-split inner form of the simply connected cover of $G$, the Rost invariant $r_{\Gt^q}$ has zero kernel.  By the main result of \cite{Gille:inv}, we may assume that $\car k = 0$.  The kernel is zero for type $G_2$ by \cite[p.~44]{GMS}, for $F_4$ it is \cite[\S40]{KMRT}, for $^3D_4$ it is \cite{G:rinvD4}, for $E_8$ it is \cite{Ch:mod5} or see \cite[15.5]{G:lens}.

Trivially, \eqref{ht1.split} implies the other conditions.  The remaining implication, that \eqref{ht1.motive} implies \eqref{ht1.split}, is \cite[Cor.~6.7]{PSZ:J}.

For existence, choosing a versal torsor under the simply connected cover of $G$ provides an extension $E \supseteq k$ and an  $E$-group $G'$ of the same quasi-split type as $G$ with $b(G') \ne 0$.  Then $G' \times F$ is the desired group, where $F$ is any  $p$-special closure of $E$.
\end{proof}

For $(G, p)$ as in the proposition, we needn't display the possible Tits $p$-indexes, because there are only two possibilities: quasi-split or anisotropic.

\begin{cor} \label{ht1.me}
Suppose $(G, p)$ is one of the pairs considered in Proposition \ref{ht1}, and $G'$ is a simple algebraic group that is an inner form of $G$.  Then $G$ and $G'$ are motivic equivalent mod $p$ if and only if $b(G)$ and $b(G')$ generate the same subgroup of $H^3(k, \Z/p\Z(2))$.
\end{cor}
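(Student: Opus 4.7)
The plan is to translate motivic equivalence mod $p$, via the main theorem of \cite{clercq:motivicequivalence} together with Proposition \ref{ht1}, into a coincidence of vanishing patterns for $b(G)$ and $b(G')$ across all field extensions of $k$, and then to recognize that such a pattern is controlled by the cyclic subgroup each class generates in the $\F_p$-vector space $H^3(k, \Z/p\Z(2))$.

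First I would observe that since $G'$ is an inner form of $G$, the two groups share the same Dynkin diagram equipped with Galois action. Hence the criterion of \cite{clercq:motivicequivalence}, applied with $f = \id$, reads: for every $p$-special extension $E/k$, the Tits $p$-indexes of $G \times E$ and $G' \times E$ have the same distinguished vertices. By Proposition \ref{ht1}, these Tits $p$-indexes take only two possible values --- quasi-split or anisotropic --- and the choice is controlled by whether $b(G_E)=0$. So motivic equivalence mod $p$ becomes the condition $b(G_E) = 0 \Leftrightarrow b(G'_E) = 0$ for every $p$-special $E/k$. A restriction-corestriction argument along the filtered colimit of prime-to-$p$ subextensions defining a $p$-special closure shows that $H^3(K, \Z/p\Z(2))$ injects into $H^3(K^{(p)}, \Z/p\Z(2))$ for every $K \supseteq k$, so the condition propagates to all extensions $K/k$.

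The forward implication then becomes formal: if $b(G)$ and $b(G')$ generate the same cyclic subgroup, $\F_p$-linearity of restriction guarantees the same over every $K$, so the two invariants vanish over exactly the same extensions.

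For the converse, I would argue by contradiction. If exactly one of the two classes is zero, specialising to $K = k$ immediately contradicts the vanishing coincidence. Otherwise both are nonzero and span distinct lines in $H^3(k, \Z/p\Z(2))$; I then let $X$ be the variety of Borel subgroups of $G$, set $K = k(X)$, and note that $G_K$ acquires a rational Borel and is therefore quasi-split, whence $b(G_K)=0$. The sought contradiction then amounts to the nonvanishing of $b(G'_K)$, i.e.\ to the assertion that $b(G')$ does not lie in $\ker\bigl(H^3(k,\Z/p\Z(2)) \to H^3(k(X),\Z/p\Z(2))\bigr)$. The decisive input, and the main obstacle of the proof, is to identify this kernel as precisely $\langle b(G)\rangle$. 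In each of the four cases this rests on a distinct deep fact: Arason's theorem on $3$-fold Pfister forms for $(G_2,2)$, Rost's theorem on the $g_3$ invariant of Albert algebras for $(F_4,3)$, the analogous generic splitting of the mod-$3$ Rost invariant for trialitarian $D_4$-groups in the spirit of \cite{G:rinvD4} for $(^3D_4,3)$, and Chernousov's description of the kernel for $(E_8,5)$ (cf.~\cite{Ch:mod5}). Granting these inputs, $b(G') \notin \langle b(G)\rangle$ forces $b(G'_{k(X)}) \ne 0$, and the injectivity of restriction to any $p$-special closure of $k(X)$ on $p$-primary classes preserves this nonvanishing, producing the desired contradiction.
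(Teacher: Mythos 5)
Your high-level strategy matches the paper's: reduce, via the main theorem of \cite{clercq:motivicequivalence} and Proposition \ref{ht1}, to the statement that $b(G)$ and $b(G')$ vanish over the same extensions, and then recognize this as a generic-splitting / kernel-of-restriction problem in $H^3$. The forward direction (same subgroup $\Rightarrow$ motivic equivalence) and the prime-to-$p$ restriction--corestriction reduction to $p$-special fields are fine. The gap is in the kernel identification in the converse, which you yourself flag as ``the main obstacle.''

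The paper resolves this step uniformly: it takes $K$ to be the function field of the principal homogeneous space $\nu_G$, and cites \cite[p.~129, Th.~9.10]{GMS} -- the theorem that for a torsor under a quasi-split simply connected simple group, the kernel of $H^3(k, \QQ/\Z(2)) \to H^3(k(\text{torsor}), \QQ/\Z(2))$ is exactly the cyclic group generated by the Rost invariant of that torsor. You instead use $K = k(X_\Delta)$ (the Borel variety) and attempt a case-by-case justification, but the sources do not all say what you need. Arason's theorem is about the function field of a Pfister quadric, not $k(X_\Delta)$ for $G_2$; one would first need the (true but unstated) fact that these two function fields are mutually dominating and hence have the same $H^3$-kernel. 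Your \cite{Ch:mod5} citation for $(E_8,5)$ is misdirected: Chernousov's theorem computes the kernel of the Rost invariant map $H^1(k, E_8) \to H^3(k, \Z/5\Z(2))$, not the kernel of restriction $H^3(k, \Z/5\Z(2)) \to H^3(K, \Z/5\Z(2))$ for a generic splitting field $K$, which is the statement you actually need and which I do not believe appears as such in the literature. For $(^3D_4,3)$ you only gesture at an analogue ``in the spirit of'' \cite{G:rinvD4}, acknowledging the same problem. So the decisive step is left unsupported in at least two of the four cases. The fix is exactly the paper's one citation: replace the Borel variety by the torsor $\nu_G$ and invoke \cite[Th.~9.10]{GMS}, which handles all four cases at once and makes the case-by-case appeals unnecessary.
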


\begin{proof}
The element $\nu_G \in H^1(k, G)$ represents a principal homogeneous space; write $K$ for its function field.  The kernel of $H^3(k, \Z/p\Z(2)) \to H^3(K, \Z/p\Z(2))$ is the group generated by $b(G)$ \cite[p.~129, Th.~9.10]{GMS}.  If the isomorphism of the Tits $p$-indexes extends to an isomorphism between $p$-indexes of $G_E$ and $G'_E$ for every extension $E$ of $k$, then $G_K$ and $G'_K$ are both quasi-split, hence $b(G')$ is in $\qform{b(G)}$; by symmetry the two subgroups generated by $b(G)$ and $b(G')$ are equal.  Conversely, if the two subgroups are equal, then for each extension $E$ of $k$, either $\res_{E/k}(b(G))$ is zero and both $G_E$ and $G'_E$ are quasi-split, or it is nonzero and both are anisotropic; in this case the isomorphism of the Tits $p$-indexes over $k$ clearly extends to an isomorphism over $E$.  Applying the main result of \cite{clercq:motivicequivalence} gives the claim.
\end{proof}

\subsubsection*{Remarks specific to $G_2$}
For $G, G'$ of type $G_2$, we have: \emph{$G \cong G'$ iff $b(G) = b(G')$,}  cf.~\cite[33.19]{KMRT} and \cite{Sp:ex}, i.e., motivic equivalence mod 2 is the same as isomorphism.  

The flag varieties for type $G_2$ are described in \cite[Example 9.2]{CG}.

\subsubsection*{Remarks specific to $F_4$ at $p = 3$}
For $G, G'$ of type $F_4$ over a 3-special field $k$, we have: \emph{$G \cong G'$ iff $b(G) = b(G')$.}  If $\car k = 0$, this is \cite{Rost:albert} and we can transfer this result to all characteristics using the same method as in \cite[\S9]{GPS}.


\subsection{Type $D_4$}
For $G$ of type $D_4$, $\Aut(\Delta)(\ksep)$ is the symmetric group on 3 letters, so $G$ has type $^tD_4$ with $t = 1$, 2, 3, or 6.  Groups of type $^1D_4$ or $^2D_4$ were treated in \S\ref{classical.sec}; this includes the case where $k$ is 2-special.   Thus it remains to consider groups of type $^3D_4$ and $p = 3$, which was treated in subsection \ref{ht1.ssec}.

\subsubsection*{Flag varieties}
For groups of type $^3D_4$ and $^6D_4$, the $k$-points of the twisted flag varieties are described in \cite{G:flag}.

\subsection{Type $F_4$ and $p = 2$}

Groups of type $F_4$, just as for type $G_2$, are all simply connected and adjoint and therefore all have Tits class zero; therefore the invariants $a$ and $b$ from Definition \ref{am.def} agree.  For $G$ a group of type $F_4$, one traditionally decomposes $b(G) \in H^3(k, \Z/6\Z(2))$ as $f_3(G) + g_3(G)$ for $f_3(G) \in H^3(k, \Z/2\Z(2))$ and $g_3(G) \in H^3(k, \Z/3\Z(2))$.  There is furthermore another cohomological invariant $f_5(G) \in H^5(k, \Z/2\Z(4))$, see \cite[37.16]{KMRT} or \cite[p.~50]{GMS} when $\car k \ne 2$ (in which case $f_5(G)$ belongs to $H^5(k, \Z/2\Z)$) or \cite[\S4]{Ptr:struct} for arbitrary $k$.  (These statements rely on viewing each group of type $F_4$ over $k$ as the automorphism group of a uniquely determined Albert $k$-algebra. For general background information on Albert algebras, see \cite[Ch.~IX]{KMRT}, \cite{Sp:ex}, or \cite{Ptr:ottawa}.)
Table \ref{F4.table} gives a dictionary relating the Tits index of $G$ with the values of these invariants; in the last column we give the signature of the Killing form for the Lie algebra over $\RR$ with that Tits index.  (Implicitly this is a statement of existence; one can calculate the signature of the Killing form from the Tits index by the formula from \cite[\S6]{Malagon}.)
\begin{table}[hbt]
\begin{tabular}{c|ccc|r} 
&&&&signature \\
Tits index of $G$ & $f_3(G)$&$f_5(G)$&$g_3(G)$&of real form \\ \hline
split&0&0&0&4 \\ 
\begin{picture}(4,1)
\multiput(0.5,0.5)(1,0){2}{\circle*{\darkrad}}
\multiput(2.5,0.5)(1,0){2}{\circle*{\darkrad}}
    \put(1.5,0.4){\line(1,0){1}}
    \put(1.5,0.6){\line(1,0){1}}
    \put(0.5,0.5){\line(1,0){1}}
    \put(2.5,0.5){\line(1,0){1}}
    
    \put(1.75, .35){\makebox(0.2,0.3)[s]{$<$}}
    
    \put(0.5,0.5){\circle{\lrad}}
    

\end{picture}&$\ne 0$&0&0&$-20$ \\
anisotropic&\multicolumn{3}{c|}{$f_5(G)$ and $g_3(G)$ not both zero}&$-52$
\end{tabular}
\caption{Tits index of a group of type $F_4$} \label{F4.table}
\end{table}

For type $F_4$, we should consider $p  = 2, 3$ by Table \ref{SG.table}.  The case $p = 3$ was handled in Proposition \ref{ht1}.  For $p = 2$, all three possible indexes occur over the 2-special field $\RR$, so they are also 2-indexes.  Alternatively, one can handle the $p = 2$ case  by noting that groups of type $F_4$ are of the form $\Aut(J)$ for an Albert algebra $J$ and then applying the Tits constructions of Albert algebras outlined in \cite{Ptr:struct}.

\begin{prop} \label{F4.me}
Groups $G$ and $G'$ be groups of type $F_4$ over a field $k$ are motivic equivalent mod 2 iff $f_3(G) = f_3(G')$ and $f_5(G) = f_5(G')$.  The groups $G$, $G'$ are motivic equivalent (mod every prime) iff $f_3(G) = f_3(G')$, $f_5(G) = f_5(G')$, and $g_3(G) = \pm g_3(G')$.
\end{prop}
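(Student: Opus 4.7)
The plan is to combine the main theorem of \cite{clercq:motivicequivalence} --- motivic equivalence modulo $p$ is the same as coincidence of Tits $p$-indexes over every extension of $k$ --- with Proposition \ref{ht1}, Corollary \ref{ht1.me}, and the dictionary of Table \ref{F4.table}. By Table \ref{SG.table}, only the primes $p=2$ and $p=3$ are relevant for type $F_4$. The case $p=3$ falls directly under Proposition \ref{ht1}: since the $3$-primary component of the invariant $b(G)$ of Definition \ref{am.def} is precisely $g_3(G)$, Corollary \ref{ht1.me} gives motivic equivalence mod $3$ iff $\langle g_3(G)\rangle = \langle g_3(G')\rangle$ in $H^3(k, \Z/3\Z(2))$, which for $3$-torsion classes amounts to $g_3(G) = \pm g_3(G')$. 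Hence the second assertion of the proposition follows from the mod-$2$ criterion together with this mod-$3$ result.

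For the mod-$2$ criterion, the forward direction is immediate: the invariants $f_3$ and $f_5$ are functorial under base change, and since $g_3$ vanishes over every $2$-special field, Table \ref{F4.table} shows that the Tits $2$-index of a group of type $F_4$ is determined entirely by the zero/nonzero pattern of $(f_3, f_5)$. Equal invariants over $k$ thus yield equal Tits $2$-indexes over every extension.

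For the converse, assume motivic equivalence mod $2$ and let $L$ denote the function field of the variety $X_{G'}$ of Borel subgroups of $G'$. Over $L$, $G'$ is split, so $f_3(G')$ and $f_5(G')$ vanish over $L$; by motivic equivalence, the Tits $2$-index of $G \times L$ is also split, and since $2$-primary cohomology classes inject into the $2$-special closure, $f_3(G)$ and $f_5(G)$ also vanish over $L$. The differences $f_3(G) - f_3(G')$ and $f_5(G) - f_5(G')$ therefore lie in the kernel of the restriction $H^*(k, \Z/2\Z(*)) \to H^*(L, \Z/2\Z(*))$. The mod-$2$ motivic decomposition of $X_{G'}$ established in \cite{PSZ:J} and its refinements describes these kernels as the cyclic subgroup generated by $f_3(G')$ in $H^3$ and, in $H^5$, as the group generated by $f_5(G')$ modulo products with $f_3(G')$. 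Running the symmetric argument with $G$ and $G'$ interchanged, and using the already established identity $f_3(G) = f_3(G')$ to eliminate the remaining ambiguity in the $f_5$-kernel, yields $f_3(G) = f_3(G')$ and $f_5(G) = f_5(G')$.

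The main obstacle is the precise identification of these restriction kernels in the cohomology of the variety of Borels of a group of type $F_4$, which relies on the motivic decomposition theorems of Petrov--Semenov--Zainoulline; the eventual elimination of the product-ambiguity in the $f_5$-kernel requires care in iterating the kernel computations over towers of Pfister function fields. The characteristic-$2$ treatment of $f_5$ is provided by the construction in \cite{Ptr:struct}.
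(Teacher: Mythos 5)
Your proposal follows the paper's strategy in outline --- decompose into $p=2$ and $p=3$, handle $p=3$ via Corollary~\ref{ht1.me}, reduce $p=2$ to controlling the restriction kernels of $f_3$ and $f_5$ --- but it diverges in a way that leaves a genuine gap, which you yourself flag at the end. The paper does \emph{not} pass to the function field of the Borel variety. Instead, because $f_3(G)$ and $f_5(G)$ are symbols, the paper chooses for each $d\in\{3,5\}$ a $d$-fold Pfister quadratic form $q_d$ with $e_d(q_d)=f_d(G)$, and sets $K_d=k(q_d)$. By Orlov--Vishik--Voevodsky (\cite[Th.~2.1]{OVV}, via \cite[p.~180]{EKM}), the kernel of $H^d(k,\Z/2\Z(d-1))\to H^d(K_d,\Z/2\Z(d-1))$ is \emph{exactly} $\langle f_d(G)\rangle$. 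Over $K_d$, Table~\ref{F4.table} forces $G\times K_d$ to be isotropic (for $d=5$) or split (for $d=3$), so motivic equivalence mod $2$ forces $f_d(G')$ to die over $K_d$, hence $f_d(G')\in\langle f_d(G)\rangle$; symmetry finishes. This cleanly handles $H^3$ and $H^5$ by the same mechanism.

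Your route attempts to use the single field $L=k(X_{G'})$. For $H^3$ this works (via \cite[Th.~9.10]{GMS}, which is what Corollary~\ref{ht1.me} uses), but the claim you make about the $H^5$ kernel --- ``the group generated by $f_5(G')$ modulo products with $f_3(G')$'' --- is not a theorem you can cite, and the PSZ motivic decomposition does not directly deliver a description of the restriction kernel in degree-5 Galois cohomology. Your proposed fix (``iterating the kernel computations over towers of Pfister function fields'') is essentially a harder version of what the paper does in one step: use the \emph{right} Pfister function field, namely the one attached to $f_5$ itself, where OVV gives the kernel on the nose. Until that $H^5$ kernel statement is replaced by the OVV argument over $k(q_5)$, the converse direction of the mod-2 claim is incomplete. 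Everything else --- the $p=3$ reduction to Proposition~\ref{ht1}/Corollary~\ref{ht1.me}, the identification of the 3-primary part of $b(G)$ with $g_3(G)$, and the forward implication via functoriality of $f_3,f_5$ together with Table~\ref{F4.table} --- is correct and matches the paper.
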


\begin{proof}
The elements $f_d(G)$ for $d = 3, 5$ are symbols, so we may find $d$-Pfister quadratic forms $q_d$ whose Milnor invariant $e_d(q_d) \in H^d(k, \Z/2\Z(d-1))$ equals $f_d(G)$.  For $K_d$ the function field of $q_d$, the kernel of the map $H^d(k, \Z/2\Z(d-1)) \to H^d(K_d, \Z/2\Z(d-1))$ is generated by $f_d(G)$ as follows from \cite[Th.~2.1]{OVV} (if $\car k \ne 2$) as explained in \cite[p.~180]{EKM}.  The first claim now follows by the arguments used to prove Corollary \ref{ht1.me}.  The second claim follows from the first and Corollary \ref{ht1.me}.
\end{proof}

We thank Holger Petersson for contributing the following example.

\begin{eg}
Given a group $G$ of type $F_4$ over $k$, we write $G = \Aut(J)$ and furthermore write $J$ as a second Tits construction Albert algebra $J = J(B, \tau, u, \mu)$ for some quadratic \'etale $k$-algebra $K$, where $B$ is a central simple $K$-algebra of degree 3 with unitary $K/k$-involution $\tau$, $u \in B^\times$ is such that $\tau(u) = u$, and $\mu \in K^\times$ satisfies $N_{K/k}(\mu) = \mathrm{Nrd}_{B}(u)$.  Then $g_3(G) = -\mathrm{cor}_{K/k}([B] \cup [\mu])$ and $f_3(G)$ is the 3-Pfister quadratic form over $k$ corresponding to the unitary involution $\tau^{(u)} \!: x \mapsto u^{-1} \tau(x) u$ on $B$.  The 1-Pfister $N_{K/k}$ is a subform of the 3-Pfister $q$ corresponding to the involution $\tau$ \cite[Prop.~2.3]{PR:reduced} so there exist $\gamma_1, \gamma_2 \in k^\times$ such that $q = \pform{\gamma_1, \gamma_2} \ot N_{K/k}$.  Combining \cite[2.9]{PR:reduced} and \cite[7.9]{Ptr:ottawa} gives
\begin{equation} \label{f5.formula}
f_5(G) = \pform{\gamma_1, \gamma_2} \ot f_3(G).
\end{equation}

Define now $G' := \Aut(J')$ where $J'$ is the second Tits construction Albert algebra $J(B, \tau, u^{-1}, \mu^{-1})$.  Since the unitary involutions $\tau^{(u)}$ and $\tau^{(u^{-1})}$ of $B$ are isomorphic under the inner automorphism  $x \mapsto uxu^{-1}$, we have $f_3(G') = f_3(G)$.  As $\pform{\gamma_1, \gamma_2}$ does not change when passing from $J$ to $J'$, we find $f_5(G') = f_5(G)$.  As clearly $g_3(G') = -g_3(G)$, $G$ and $G'$ are motivic equivalent mod $p$ for all $p$.

It is unknown if $J'$ depends on the choice of expression of $J$ as a second Tits construction; perhaps it only depends on $J$. This is a specific illustration of the general open problem \cite[p.~465]{SeCGp}: Do the invariants $f_3$, $f_5$, and $g_3$ distinguish groups of type $F_4$?
\end{eg}

\subsubsection*{Flag varieties}
For groups of type $F_4$, the $k$-points of the twisted flag varieties are described in \cite[9.1]{CG}, relying on \cite{Racine:point} or \cite{Asch:E6}.  A portion of this description for $k = \RR$ can be found in \cite[28.22, 28.27]{Frd:E7.8}.  For $J$ an Albert algebra, $\Aut(J)$ is isotropic iff $J$ has nonzero nilpotents, and  $\Aut(J)$ is split iff $J$ is the split Albert algebra.

\subsection{Type $E_6$ and $p = 2, 3$}

For $ G$ of type $^1E_6$, the class $t_G$ has order dividing 3 and can be represented by a central simple algebra of degree 27 \cite[p.~213]{Ti:R} which we denote by $A$; it is only defined up to interchanging with its opposite algebra.
The list of possible Tits indexes from \cite{Ti:Cl} is reproduced in the first column of Table \ref{oE6.table}.  The constraints on the index of $A$ given in the second column can be deduced from the possible indexes of the Tits algebras of the semisimple anisotropic kernel as explained in \cite[p.~211]{Ti:R}.  In the column for 2-special fields, we give the signature of the Killing form on the real Lie algebra if one occurs with that Tits index.
\begin{table}[hbt]
\begin{tabular}{c|c|cc}
&&occurs as&occurs as\\
index of $G$&$\ind A$&a 2-index?&a 3-index? \\ \hline
split&1&yes (6)&yes \\
\begin{picture}(5,2)
    \multiput(0.5,0.5)(1,0){5}{\circle*{\darkrad}}
    \put(2.5,1.5){\circle*{\darkrad}}

    \put(0.5,0.5){\line(1,0){4}}
    \put(2.5,1.5){\line(0,-1){1}}

    \put(0.5,0.5){\circle{\lrad}}
    \put(4.5,0.5){\circle{\lrad}}
\end{picture}&1&yes ($-26$)&no \\
\begin{picture}(5,2)
    \multiput(0.5,0.5)(1,0){5}{\circle*{\darkrad}}
    \put(2.5,1.5){\circle*{\darkrad}}

    \put(0.5,0.5){\line(1,0){4}}
    \put(2.5,1.5){\line(0,-1){1}}

    \put(2.5,0.5){\circle{\lrad}}
    \put(2.5,1.5){\circle{\lrad}}
\end{picture} &  3&no&yes\\
anisotropic&divides 27&no&yes
\end{tabular}
\caption{Possible Tits indexes of groups of type $^1E_6$} \label{oE6.table}
\end{table}

\subsubsection{Type $^1E_6$ and $p = 3$}
Over a 3-special field, every group of type $^1D_4$ is split, therefore the Tits index with semisimple anisotropic kernel of that type (row 2 in Table \ref{oE6.table}) cannot occur.  The following table is justified in \cite[\S10]{GPS}, where the top row --- which is only proved assuming $\car k = 0$ --- refers to the mod-3 $J$-invariant defined in \cite{PSZ:J} describing the decomposition of the mod-3 Chow motive of the variety $X_\Delta$ of Borel subgroups of $G$.
\begin{center}
\begin{tabular}{c|ccccc}
$J_3(G)$&$(0,0)$&$(1,0)$&$(0,1)$&$(1,1)$&$(2,1)$ \\ \hline
\rb{Tits index of $G$}&\rb{split}&\begin{picture}(5,2)
    \multiput(0.5,0.5)(1,0){5}{\circle*{\darkrad}}
    \put(2.5,1.5){\circle*{\darkrad}}

    \put(0.5,0.5){\line(1,0){4}}
    \put(2.5,1.5){\line(0,-1){1}}

    \put(2.5,0.5){\circle{\lrad}}
    \put(2.5,1.5){\circle{\lrad}}
\end{picture}
&\multicolumn{3}{c}{\rb{$\cdots$ anisotropic $\cdots$}}\\
index of $A$&1&3&1&3&9 or 27 
\end{tabular}                           
\end{center}

\subsubsection{Type $^1E_6$ with $t_G = 0$} \label{E6.Tits}
For any group $G$ of type $^1E_6$ with $t_G = 0$, we get from Definition \ref{am.def} an element $a(G) \in H^3(k, \Z/6\Z(2))$, which we write as $f_3(G) + g_3(G)$ for $f_3(G)\in H^3(k, \Z/2\Z(2))$ and $g_3(G) \in H^3(k, \Z/3\Z(2))$.  It follows from \cite[11.1]{G:lens} that the simply connected cover of $G$ is the group of isometries of the cubic norm form of an Albert algebra $J$, and from general properties of the Rost invariant that $f_3(G)$ and $g_3(G)$ equal the corresponding values for the automorphism group $\Aut(J)$ of type $F_4$.  Combining the description of the flag varieties of $G$ in terms of subspaces of $J$ from \cite[\S7]{CG} as well as the relationships between values of the cohomological invariants and properties of $J$ from \cite[\S40]{KMRT} and \cite{Ptr:struct} gives the information in Table \ref{E6.triv}.
\begin{table}[hbt]
\begin{tabular}{c|cc} 
Tits index of $G$ & $f_3(G)$&$g_3(G)$\\ \hline
split&0&0 \\ 
\begin{picture}(5,2)
    \multiput(0.5,0.5)(1,0){5}{\circle*{\darkrad}}
    \put(2.5,1.5){\circle*{\darkrad}}

    \put(0.5,0.5){\line(1,0){4}}
    \put(2.5,1.5){\line(0,-1){1}}

    \put(0.5,0.5){\circle{\lrad}}
    \put(4.5,0.5){\circle{\lrad}}
\end{picture}
&$\ne 0$&0 \\
anisotropic&any&$\ne 0$
\end{tabular}
\caption{Table of possible Tits indexes for $G$ of type $^1E_6$ with $t_G = 0$} \label{E6.triv}
\end{table}

\begin{prop} \label{oE.me}
Let $G$ and $G'$ be groups of type $^1E_6$ over a field $k$ such that $t_G = t_{G'} = 0$.  Then $G$ and $G'$ are motivic equivalent modulo a prime $p$ if and only if the $p$-torsion components of $a(G)$ and $a(G')$ generate the same subgroup of $H^3(k, \Z/p\Z(2))$.
\end{prop}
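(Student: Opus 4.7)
The plan is to apply the main result of \cite{clercq:motivicequivalence} to reduce to showing that the Tits $p$-indexes of $G_E$ and $G'_E$ coincide for every extension $E/k$. By Table \ref{SG.table} only $p = 2$ and $p = 3$ need attention, since for other primes every group of type $^1E_6$ is split over a coprime-to-$p$ extension and both the criterion and the conclusion become automatic. The structural observation that makes the rest run is that a $p$-special field $L$ has pro-$p$ absolute Galois group, hence $H^i(L, M) = 0$ for $i \geq 1$ and every finite Galois module $M$ of order prime to $p$; applied to the prime-to-$p$ component of $a(G_L) \in H^3(L, \Z/6\Z(2))$, this forces $a(G_L)$ to reduce over $L$ to its $p$-primary component --- to $f_3(G_L)$ when $p = 2$ and to $g_3(G_L)$ when $p = 3$. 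Combined with Table \ref{E6.triv}, the Tits $p$-index of $G$ over $L$ (under the hypothesis $t_G = 0$) is then determined entirely by whether this $p$-primary component vanishes, and motivic equivalence mod $p$ becomes the assertion that $\res_{E/k}(a_p(G))$ and $\res_{E/k}(a_p(G'))$ vanish simultaneously for every extension $E/k$, where $a_p$ denotes the $p$-primary part.

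The backward direction is then formal: equality of the cyclic subgroups $\langle a_p(G)\rangle$ and $\langle a_p(G')\rangle$ of $H^3(k, \Z/p\Z(2))$ is preserved by restriction, and in a cyclic group of prime order $p$ every nonzero element is a generator, so the two restrictions share their vanishing locus over every $E$.

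For the forward direction, the task is to produce for each $p \in \{2,3\}$ an extension $K/k$ such that $a_p(G_K) = 0$ and $\ker\bigl(H^3(k, \Z/p\Z(2)) \to H^3(K, \Z/p\Z(2))\bigr) = \langle a_p(G)\rangle$; motivic equivalence then forces $a_p(G'_K) = 0$, hence $a_p(G') \in \langle a_p(G)\rangle$, and symmetry closes the argument. For $p = 3$, I will take $K$ to be the function field of the $\Gt$-torsor attached to $\xi_{\bar G} \in H^1(k, \Gt)$ from Lemma \ref{tits.class}, the kernel computation being exactly \cite[p.~129, Th.~9.10]{GMS} applied to the Rost invariant. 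For $p = 2$, I will use the description of the simply connected cover of $G$ recalled in \S\ref{E6.Tits} as the isometry group of the norm form of an Albert algebra $J$, so that $f_3(G) = f_3(\Aut(J))$ is a symbol (the Arason invariant of a $3$-Pfister form attached to $J$); I then take $K$ to be the function field of the corresponding Pfister quadric and invoke \cite[Th.~2.1]{OVV} for the kernel description, exactly as in the proof of Proposition \ref{F4.me}. The main obstacle is precisely the identification of these two splitting fields and the invocation of the correct kernel-of-restriction theorem in each case; once they are in hand, the pro-$p$ reduction together with Table \ref{E6.triv} makes the translation back to Tits $p$-indexes immediate.
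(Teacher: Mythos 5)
Your proposal is correct and follows essentially the same route as the paper: the paper's one-line proof simply points to Table \ref{E6.triv} together with the main result of \cite{clercq:motivicequivalence}, and the details you supply for the forward direction (the kernel-of-restriction computation via the function field of the $\Gt$-torsor of $\xi_{\bar G}$ and \cite[Th.~9.10]{GMS} for $p=3$, and via the Pfister quadric function field and \cite[Th.~2.1]{OVV} for $p=2$) are precisely the arguments the paper deploys in the closely analogous Corollary \ref{ht1.me} and Proposition \ref{F4.me}. The pro-$p$ reduction observation and the backward direction via prime-cyclicity are also exactly what is implicit in the paper's terse reference.
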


\begin{proof}
Combine Table \ref{E6.triv} with the main result of \cite{clercq:motivicequivalence}.
\end{proof}

\subsubsection{Type $^1E_6$ and $p = 2$} \label{1E6.2}
Over a 2-special field, the Tits class of any group $G$ of type $E_6$ is zero so Table \ref{E6.triv} applies and in particular $G$ is isotropic.  

From this, we deduce the third column of Table \ref{oE6.table}.  Note that the two possible 2-indexes for a group $G$ of type $^1E_6$ are distinguished by the value of $f_3(G)$.

\begin{cor}
Groups $G$ and $G'$ of type $^1E_6$ over a field $k$ are motivic equivalent mod 2 if and only if $f_3(G) = f_3(G')$.
\end{cor}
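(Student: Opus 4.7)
The plan is to mimic the proof of Proposition \ref{F4.me}, combining the main theorem of \cite{clercq:motivicequivalence} with the very short list of possible Tits $2$-indexes for type ${}^1E_6$ established in subsection \ref{1E6.2}, and closing with an Orlov--Vishik--Voevodsky function-field argument.

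First I would use \cite{clercq:motivicequivalence} to translate motivic equivalence mod $2$ into the condition that the Tits $2$-indexes of $G_E$ and $G'_E$ coincide for every extension $E/k$. Subsection \ref{1E6.2} shows that over any $2$-special field such a Tits index is either split or row $2$ of Table \ref{oE6.table}, distinguished precisely by whether $f_3 = 0$. Since odd-degree extensions induce injections on $2$-torsion of $H^3(-,\Z/2\Z(2))$, the vanishing of $f_3(G_E)$ is already detected over $E$ itself, so motivic equivalence mod $2$ becomes the condition: for every $E/k$, $f_3(G_E) = 0$ if and only if $f_3(G'_E) = 0$. The implication $f_3(G) = f_3(G') \Rightarrow$ this condition is routine by functoriality.

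For the converse, I would argue as in Proposition \ref{F4.me}: $f_3(G)$ is a symbol (the Arason invariant of a $3$-Pfister form attached to the Albert algebra underlying $G$, by subsection \ref{E6.Tits}), so choose a $3$-Pfister $q$ with $e_3(q) = f_3(G)$, let $K$ be its function field, and invoke \cite[Th.~2.1]{OVV} to identify the kernel of $H^3(k,\Z/2\Z(2)) \to H^3(K,\Z/2\Z(2))$ with $\{0, f_3(G)\}$. The hypothesis applied over $K$ gives $f_3(G'_K) = 0$; hence $f_3(G') \in \{0, f_3(G)\}$, and interchanging the roles of $G$ and $G'$ yields equality.

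The main obstacle is certifying that $f_3(G)$ is a symbol when $t_G \neq 0$, since subsection \ref{E6.Tits} only gives the Albert-algebra description assuming $t_G = 0$. I would handle this by passing at once to a $2$-special closure $k^{(2)}$ of $k$, where $t_G$ vanishes automatically: $t_G$ lives in the $3$-torsion of $\br(k^{(2)})$, and since $k^{(2)}$ admits no non-trivial odd-degree extension it supports no non-trivial $3$-torsion Brauer class. Over $k^{(2)}$ the symbol argument above goes through verbatim, and the injectivity of $H^3(k,\Z/2\Z(2)) \to H^3(k^{(2)},\Z/2\Z(2))$ on $2$-torsion then descends the equality of $f_3$-invariants back to $k$.
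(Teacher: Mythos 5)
Your proof is correct and, in spirit, matches the argument the paper intends, though you arrive at it by unfolding Proposition \ref{F4.me} rather than by citing Proposition \ref{oE.me}. The paper's own proof is a one-liner: since $t_G$ and $t_{G'}$ vanish over every $2$-special field, Proposition \ref{oE.me} (applied after base change to a $2$-special closure, then descent by a restriction--corestriction argument) gives the result. You instead re-run the OVV/Pfister--function-field argument from Proposition \ref{F4.me} directly, using that $f_3$ of a ${}^1E_6$ group with trivial Tits class is the Arason symbol of a $3$-Pfister form (via the Albert-algebra description in \S\ref{E6.Tits}); the paper's implicit route (via the proof of Corollary \ref{ht1.me}) uses instead the theorem that the kernel of restriction to the function field of a generic torsor is generated by the Rost invariant. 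Both are valid. Your treatment of the $t_G \neq 0$ case --- observing that $t_G$ is $3$-torsion so dies over $k^{(2)}$, doing everything there, and descending by injectivity of restriction along the coprime-to-$2$ extension $k^{(2)}/k$ --- fills in a step the paper leaves implicit, and is a genuine strength of your write-up. One small caveat worth flagging: the OVV input requires $\car k \neq 2$ (the paper notes in Proposition \ref{F4.me} that \cite{EKM} supplies a characteristic-free substitute), so you should either restrict or add the same caveat; the generic-torsor route in Corollary \ref{ht1.me} sidesteps this.
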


\begin{proof}
As the Tits classes $t_G$ and $t_{G'}$ are zero over every 2-special field, the claim follows immediately from Proposition \ref{oE.me}.
\end{proof}

%

\subsubsection{Type $^2E_6$ and $p = 2$}
For $G$ of type $^2E_6$, the element $b(G)$ belongs to $H^3(k, \Z/4\Z(2))$.  (If $k$ is 2-special, then $t_G = 0$ and $a(G) = b(G)$.)  In this setting, the possible Tits 2-indexes have been determined in \cite[Prop.~2.3]{GPe}.  We reproduce that table here, as well as indicate the signature of the Killing form on the real simple Lie algebra with that Tits index, if such occurs.
\begin{table}[hbt]
\begin{tabular}{ccc} \\
index & $b(G) \in H^3(k, \Z/4\Z(2))$&occurs over $\RR$? \\ \hline
quasi-split & $0$&yes (2) \\
\begin{picture}(3.5, 1.1)
\put(0,0){\usebox{\dEpic}} 
\put(2.5,0.5){\oval(0.4,0.75)}
\put(0.25,0.5){\circle{\lrad}}
\end{picture}& nonzero symbol in $H^3(k, \Z/2\Z(2))$ killed by $K$&yes ($-14$)\\
\begin{picture}(3.5, 1.1)
\put(0,0){\usebox{\dEpic}}
\put(2.5,0.5){\oval(0.4,0.75)}
\end{picture}& symbol in $H^3(k, \Z/2\Z(2))$ not killed by $K$&no\\
\begin{picture}(3.5, 1)
\put(0,0){\usebox{\dEpic}} 
\put(0.25,0.5){\circle{\lrad}}
\end{picture}& in $H^3(k, \Z/2\Z(2))$, not a symbol&no \\
anisotropic & $\ne 0$&yes ($-78$)
\end{tabular}
\caption{Possible Tits 2-indexes for $G$ of type $^2E_6$} \label{dE6.table}
\end{table}


\subsubsection*{Flag varieties}
The $k$-points of the twisted flag varieties for groups of type $^1E_6$ are described in \cite[\S7]{CG} and for type $^2E_6$ in \cite[\S5]{GPe}.  
If $G$ is a group of type $^1E_6$ with $t_G = 0$, then the simply connected cover of $G$ is the 
the group of norm isometries of an Albert $k$-algebra $J$ and the variety of total flags has $k$-points $\{ S_1 \subset S_2 \subset S_3 \subset S_4 \subset S_5 \}$ where $S_i \subset J$ is a singular subspace, $\dim S_i = i$, and $S_5$ is not a maximal singular subspace.

\subsection{Type $E_7$ and $p = 2, 3$}
Let now $G$ have type $E_7$.  The class $t_G$ has order 1 or 2 and can be represented by a unique central simple algebra of degree 8 \cite[6.5.1]{Ti:R}, which we denote by $A$. Table \ref{E7.table} lists the possible Tits indexes for $G$.  As before, if a Tits index occurs over $\RR$, we list the signature of the Killing form in the 2-index column. 
\begin{table}[hbt]
\begin{tabular}{c|c|cc} 
&&occurs as&occurs as\\
index of $G$&$\ind A$&a 2-index?&a 3-index? \\ \hline
split&1&yes (7)&yes \\
\begin{picture}(7,1.9)
 \put(0,0){\usebox{\Esev}} 
 \put(1,0.7){\circle{\lrad}}
 \put(2,0.7){\circle{\lrad}}
 \put(3,0.7){\circle{\lrad}}
 \put(5,0.7){\circle{\lrad}}
 \end{picture}&2&yes ($-5$)&no\\
\begin{picture}(7,1.9)
 \put(0,0){\usebox{\Esev}} 
 \put(1,0.7){\circle{\lrad}}
  \put(5,0.7){\circle{\lrad}}
    \put(6,0.7){\circle{\lrad}}
\end{picture}&1&yes ($-25$)&no \\
\begin{picture}(7,1.9)
 \put(0,0){\usebox{\Esev}} 
 \put(1,0.7){\circle{\lrad}}
  \put(5,0.7){\circle{\lrad}}
\end{picture}&2&yes&no \\
\begin{picture}(7,1.9)
 \put(0,0){\usebox{\Esev}} 
  \put(5,0.7){\circle{\lrad}}
\end{picture}&2&yes&no\\
\begin{picture}(7,1.9)
  \put(0,0){\usebox{\Esev}} 
  \put(1,0.7){\circle{\lrad}}
\end{picture}&divides 4&yes&no \\
\begin{picture}(7,1.9)
  \put(0,0){\usebox{\Esev}} 
  \put(6,0.7){\circle{\lrad}}
\end{picture}&1&no&yes\\
anisotropic&divides 8&yes ($-133$)&no
\end{tabular}
\caption{Tits indexes of groups of type $E_7$} \label{E7.table}
\end{table} 

%
%

\subsubsection{Type $E_7$ and $p = 2$}  We must justify the third column of Table \ref{E7.table}.  As in \S\ref{1E6.2}, a group of type $^1E_6$ over a 2-special field is isotropic, so a group of type $E_7$ over a 2-special field cannot have semisimple anisotropic kernel of type $^1E_6$.

A versal form of $E_7$ over some field $F$ has Tits algebra of index 8 by \cite[p.~164]{MPW2}, \cite{M:max}, or \cite[Lemma 14.3]{GaGi}.  Going up to a 2-special extension of $F$, it is clear that there do exist groups of type $E_7$ that are anisotropic over a 2-special field and have Tits algebra of index 8.  The compact real form of $E_7$ has Tits algebra the quaternions (of index 2).  An example of an anisotropic group $G$ of type $E_7$ with $t_G = 0$ over a 2-special field is given in \cite[Example A.2]{G:lens}.

\subsubsection{Type $E_7$ and $p = 3$}
We now justify Table \ref{E7.3}, which implies the claims in the fourth column of Table \ref{E7.table}.  Let $G$ be a group of type $E_7$ over a 3-special field $k$, so $t_G = 0$.

We claim that $G$ is isotropic.  Suppose first the $\car k \ne 2, 3$.  Put $E_6$ and $E_7$ for the split simply connected groups of those types.  By \cite[12.13]{G:lens} the natural inclusion $E_6 \subset E_7$ gives a surjection in cohomology $H^1(k, E_6 \rtimes \mu_4) \to H^1(k, E_7)$.  As $H^1(k, \mu_4) = 0$, the class $\xi_G$ from Lemma \ref{tits.class} lies in the image of $H^1(k, E_6)$ and it follows that the Tits index of $(E_7)_{\xi_G}$, i.e., of the simply connected cover of $G$, is as in the bottom row of Table \ref{E7.3} or has more distinguished vertices. 
Note that as in Table \ref{E6.triv}, $(E_6)_{\xi_G}$ is split iff $g_3((E_6)_{\xi_G}) = 0$ iff $b(G) = 0$, and anisotropic iff $g_3((E_6)_{\xi_G}) \ne 0$ iff $b(G) \ne 0$.
This completes the justification of Table \ref{E7.3} if $\car k \ne 2, 3$.  If $k$ has characteristic 2 or 3, then arguing as in \cite[\S9]{GPS} reduces the claim to the case of characteristic zero.
\begin{table}[hbt]
\centering
\begin{tabular}{cc}
Tits 3-index of $G$&$b(G)$ \\ \hline
split & 0 \\
\begin{picture}(7,1.9)
  \put(0,0){\usebox{\Esev}} 
  \put(6,0.7){\circle{\lrad}}
\end{picture} & $\ne 0$
\end{tabular}
\caption{Possible Tits 3-indexes for a group $G$ of type $E_7$}
\label{E7.3}
\end{table}

\begin{prop}
Simple algebraic groups $G$ and $G'$ of type $E_7$ over a field $k$ are motivic equivalent mod 3 iff $b(G) = \pm b(G') \in H^3(k, \Z/3\Z(2))$.
\end{prop}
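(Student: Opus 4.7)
My plan is to adapt the template of Corollary \ref{ht1.me} and Proposition \ref{oE.me}: combine Table \ref{E7.3}---which states that over a 3-special field the Tits 3-index of a group of type $E_7$ is determined entirely by whether $b(G)$ vanishes---with a generic splitting argument resting on \cite[p.~129, Th.~9.10]{GMS}. Since the Dynkin diagram of $E_7$ has no non-trivial automorphism, the isomorphism $f$ appearing in the main result of \cite{clercq:motivicequivalence} is forced to be the identity, so no ambiguity in matching simple roots will arise.

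The $(\Leftarrow)$ direction is straightforward. Assuming $b(G') = \pm b(G)$, every extension $E/k$ satisfies $b(G_E) = 0$ if and only if $b(G'_E) = 0$. In particular, for each 3-special $E \supseteq k$, Table \ref{E7.3} forces the Tits indexes of $G_E$ and $G'_E$ to coincide, and the main theorem of \cite{clercq:motivicequivalence} then yields motivic equivalence mod $3$.

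For $(\Rightarrow)$, I would first pass to a 3-special closure $k_3$ of $k$: restriction $H^3(k,\Z/3\Z(2)) \to H^3(k_3,\Z/3\Z(2))$ is injective by a standard restriction-corestriction argument, so it suffices to establish $b(G'_{k_3}) = \pm b(G_{k_3})$. Replacing $G$ and $G'$ by their adjoint quotients (which leaves Tits indexes and the invariants $b$ untouched), Lemma \ref{tits.class} provides $\xi_G \in H^1(k_3,\Gt)$ such that twisting by $\xi_G$ splits $G$, and Definition \ref{am.def} identifies the 3-primary component of $-r_{\Gt}(\xi_G)$ with $b(G_{k_3})$. Let $K$ be the function field of the $\Gt$-torsor represented by $\xi_G$ (equivalently, the twisted variety of Borel subgroups of $G$). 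Then $G_K$ is split, so by motivic equivalence the 3-special closure $K^{(3)}$ of $K$ also splits $G'$, forcing $b(G'_K) = 0$ once one notes that restriction to $K^{(3)}$ is again injective on $H^3(\cdot,\Z/3\Z(2))$. Now \cite[p.~129, Th.~9.10]{GMS} identifies the kernel of $H^3(k_3,\QQ/\Z(2)) \to H^3(K,\QQ/\Z(2))$ with the cyclic subgroup generated by $r_{\Gt}(\xi_G)$; passing to 3-primary parts identifies the kernel of $H^3(k_3,\Z/3\Z(2)) \to H^3(K,\Z/3\Z(2))$ with $\langle b(G_{k_3})\rangle$. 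Hence $b(G'_{k_3}) \in \langle b(G_{k_3})\rangle$, and the symmetric argument gives the reverse inclusion, so both generate the same subgroup. Since a cyclic group of order dividing $3$ has its two generators related by a sign, one obtains $b(G'_{k_3}) = \pm b(G_{k_3})$, and this equality descends to $k$.

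I expect no real obstacle: the two key inputs---Table \ref{E7.3} (which rules out intermediate Tits 3-indexes and reduces the problem to the vanishing of $b$) and \cite[Th.~9.10]{GMS} (identifying the kernel of the generic splitting restriction map with the subgroup generated by the Rost invariant)---do the heavy lifting. The only careful bookkeeping required is to track 3-primary components when moving between $\QQ/\Z(2)$ and $\Z/3\Z(2)$, and to verify that passage to a 3-special closure preserves motivic equivalence mod $3$ (which follows from the functoriality of Chow motives under base change).
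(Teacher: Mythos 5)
Your proof is correct and matches the paper's approach: the paper's proof consists of the single sentence ``Combine Table \ref{E7.3} and the arguments used for Corollary \ref{ht1.me},'' which is exactly what you have carried out. You are somewhat more careful than the terse original in passing to a $3$-special closure (so that $t_G=0$) and in working with the simply connected torsor $\xi_G$ rather than the adjoint torsor $\nu_G$, which matches the hypotheses of \cite[Th.~9.10]{GMS} more cleanly; this is a faithful elaboration rather than a different route.
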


\begin{proof}
Combine Table \ref{E7.3} and the arguments used for Corollary \ref{ht1.me}.
\end{proof}

\subsubsection*{Flag varieties} The flag varieties for groups of type $E_7$ are described in \cite[\S4]{G:e7} and \cite{G:struct}.

\subsection{Type $E_8$ and $p = 2, 3$}

For type $E_8$, by Table \ref{SG.table} we should consider $p = 2, 3, 5$.  The case $p = 5$ was handled in Proposition \ref{ht1}.  We list the possible Tits indexes from \cite{Ti:Cl} in the first column of Table \ref{E8.table}.
\begin{table}[hbt]
\begin{tabular}{c|ccc} 
&occurs as&occurs as&occurs as\\
Tits index of $G$&a 2-index?&a 3-index?&a 5-index? \\ \hline
split&yes (8)&yes&yes \\
\begin{picture}(8,1.9)
 \put(0,0){\usebox{\Eviii}} 
 \put(1,0.7){\circle{\lrad}}
 \put(5,0.7){\circle{\lrad}}
 \put(6,0.7){\circle{\lrad}}
 \put(7,0.7){\circle{\lrad}}
 \end{picture}&yes ($-24$)&no&no\\
\begin{picture}(8,1.9)
 \put(0,0){\usebox{\Eviii}}
 \put(1,0.7){\circle{\lrad}}
 \put(7,0.7){\circle{\lrad}}
 \end{picture}&yes&no&no\\
\begin{picture}(8,1.9)
 \put(0,0){\usebox{\Eviii}} 
 \put(6,0.7){\circle{\lrad}}
 \put(7,0.7){\circle{\lrad}}
 \end{picture}&no&yes&no\\
\begin{picture}(8,1.9)
 \put(0,0){\usebox{\Eviii}} 
 \put(1,0.7){\circle{\lrad}}
 \end{picture}&yes&no&no\\
\begin{picture}(8,1.9)
 \put(0,0){\usebox{\Eviii}} 
 \put(7,0.7){\circle{\lrad}}
 \end{picture}&yes&no&no\\
anisotropic&yes ($-248$)&yes&yes
\end{tabular}
\caption{Possible Tits indexes for a group of type $E_8$} \label{E8.table}
\end{table}

\subsubsection{Type $E_8$ and $p = 2$}  As anisotropic strongly inner groups of types $D_4$, $D_6$, $D_7$, and $E_7$ exist over 2-special fields by the previous sections, and a compact $E_8$ exists over $\RR$, the ``yes'' entries in the second column of Table \ref{E8.table} are clear.  For the one ``no'', we refer to Table \ref{oE6.table}.

\subsubsection{Type $E_8$ and $p = 3$} In view of results for groups of smaller rank, it suffices to justify the existence of an anisotropic $E_8$ over a 3-special field.  For this, we refer to the following table, which is justified in \cite[\S10]{GPS}.
\[
\begin{tabular}{c|ccc}
$J_3(G)$&$(0,0)$&$(1,0)$&$(1,1)$\\ \hline
\rb{Tits 3-index of $G$}&\rb{split}&
\begin{picture}(8,1.9)
    \multiput(1,0.7)(1,0){7}{\circle*{\darkrad}}
    \multiput(6,0.7)(1,0){2}{\circle{\lrad}}
    \put(3,1.45){\circle*{\darkrad}}

    \put(1,.7){\line(1,0){6}}
    \put(3,1.45){\line(0,-1){0.75}}
\end{picture}
&\rb{anisotropic}\\
$r_{G_0}(\xi)$&$0$&nonzero symbol&otherwise
\end{tabular}
\]

\subsubsection*{Flag varieties} Currently there is no concrete description of the flag varieties of $E_8$ available in the form analogous to the others presented here.  However, groups of type $E_8$ can be viewed as the automorphism group of various algebraic structures as explained in \cite{GG:simple} (such as a 3875-dimensional algebra, for fields of characteristic $\ne 2$), so the methods of \cite{CG} can in principle be used to give a concrete description of the flag varieties in terms of such structures.

\bibliographystyle{amsplain}
\bibliography{skip_master}

\end{document}